\documentclass{article}

\usepackage[margin=1in]{geometry} % Márgenes de 1 pulgada en todos los lados

\usepackage{amsmath}
\usepackage{amsfonts}
\usepackage{amssymb}
\usepackage{graphicx}
\usepackage{xcolor}
\usepackage{cite}
\usepackage[normalem]{ulem}
\providecommand{\U}[1]{\protect\rule{.1in}{.1in}}

\newtheorem{theorem}{Theorem}      % Teoremas numerados de forma independiente
\newtheorem{lemma}{Lemma}      % Numeración independiente

\newtheorem{definition}{Definition}

\newtheorem{remark}{Remark}    % Numeración independiente

\newtheorem{case}{Case}        % Numeración independiente

\newenvironment{proof}[1][Proof]{\noindent\textbf{#1.} }{\ \rule{0.5em}{0.5em}}

\definecolor{navy}{RGB}{0, 0, 128} 
\definecolor{DarkRed}{RGB}{139, 0, 0}

\usepackage{orcidlink}

\begin{document}
%%%%%%%%%%%%%%%%%%%%%%%%%%%%%%%%%%%%%%%%%%%%%%%%%%%%%%%%%%%%%

\title{Strong Hyperbolicity of Second-Order PDEs via Matrix Pencils}

\author{Fernando Abalos\orcidlink{0000-0001-7863-3711}
  \footnote{Email: j.abalos@uib.es} $\,{}^{1}$ and David
  Hilditch\orcidlink{0000-0001-9960-5293}
  \footnote{Email: david.hilditch@tecnico.ulisboa.pt} $\,{}^2$}

\date{\small{${}^1$Departament de F\'isica, Universitat de les Illes
    Balears, Palma de Mallorca, E-07122, Spain and Institute of
    Applied Computing and Community Code (IAC3), Universitat de les
    Illes Balears, Palma de Mallorca, E-07122, Spain,\\ ${}^2$CENTRA,
    Departamento de F\'isica, Instituto Superior T\'ecnico IST,
    Universidade de Lisboa UL, Avenida Rovisco Pais 1, 1049 Lisboa,
    Portugal} }

\maketitle

\begin{abstract}
  We introduce a definition of strong hyperbolicity for second order
  partial differential equations using second order pencils. We show
  that this definition is equivalent to the standard one, derived by
  reducing the equations to first order form, but with the benefit of
  simplifying the calculations necessary to check hyperbolicity. In
  addition, we observe an interesting property, namely that when a
  system is strongly hyperbolic, its second order pencil can be
  factorized as a product of two diagonalizable first order
  pencils. Finally, we present an application to a vector potential
  for of Maxwell's equations, with a general extension and gauge
  fixing.
\end{abstract}

%%%%%%%%%%%%%%%%%%%%%%%%%%%%%%%%%%%%%%%%%%%%%%%%%%%%%%%%%%%%%
\section{Introduction}
%%%%%%%%%%%%%%%%%%%%%%%%%%%%%%%%%%%%%%%%%%%%%%%%%%%%%%%%%%%%%

In physics we frequently encounter systems of nonlinear partial
differential equations (PDEs)~\cite{Ger96}. Basic questions concerning
dynamics within such systems can be addressed by studying their
linearization about arbitrary background solutions. If, for instance,
the linearization of such a system satisfies sufficiently strong
algebraic conditions, then, subject to certain smoothness
requirements, the original PDEs will admit an initial value problem
that is well-posed locally in time. These structural conditions can be
understood by working in a neighborhood of a single point of space and
time small enough that the coefficients of the linearized problem are
constant to a good approximation. Taken together, this is called the
linear, frozen coefficient approximation to the original nonlinear
PDE.

To understand the necessary structural conditions consider, as a
concrete example, the first order linear system
\begin{align}
  \mathfrak{N}^{a}\partial_a\mathbf{u}(x^\mu)=
  \partial_t\mathbf{u}(t,x^i)+\mathbf{A}^p\partial_p\mathbf{u}(t,x^i)
  =\mathbf{f}
  \,,\label{eqn:FT1S}
\end{align}
where~$x^\mu$ denotes the complete collection of coordinates, $t$ the
time coordinate, $x^i$ denotes the spatial coordinates, $\partial_a$
and $\partial_p$ denote the full and spatial partial derivatives
respectively, the matrix coefficients~$\mathbf{A}^p$ are constant
and~$\mathbf{f}$ denotes non-principal (lower-derivative) terms. It is
well known that if the system~\eqref{eqn:FT1S} is strongly hyperbolic,
then its initial value problem is well posed in the~$L^2$
norm~\cite{KreLor89,GusKreOli95,Hil13,Aba17}. The
system~\eqref{eqn:FT1S} is called strongly hyperbolic if, for all unit
spatial covectors~$\hat{k}_p$, the principal
symbol~$\mathbf{A}^{\hat{k}}\equiv\mathbf{A}^p\hat{k}_p$ has real
eigenvalues, a complete set of eigenvectors and obeys the uniformity
condition
\begin{align}
  |\mathbf{T}_{\hat{k}}|+|\mathbf{T}_{\hat{k}}^{-1}|\leq K,
\end{align}
where $K$ denotes a real constant independent of~$\hat{k}_p$, the
matrix~$\mathbf{T}_{\hat{k}}$ is constructed with the eigenvectors as
columns, and~$|\cdot|$ is the usual matrix spectral norm. Systems
which satisfy only the condition on eigenvalues are called weakly
hyperbolic. In this context, well-posedness can be interpreted as the
statement that solutions of~\eqref{eqn:FT1S} to the initial value
problem obey an estimate of the form
\begin{align}
  ||\mathbf{u}(t,\cdot)||_{L^2}\leq C e^{\alpha t}||\mathbf{u}(0,\cdot)||_{L^2}
  \,,
\end{align}
for some real~$C,\alpha>0$. In practice the crucial reason for the
utility of this definition is that it is purely algebraic and, in many
cases, easy to check for a given system of interest.

Following~\cite{AbaReu18}, this definition can equivalently be
reformulated by considering instead the properties of the first order
matrix pencil~$\mathbf{M}(\lambda)=\mathfrak{N}^a(\lambda
n_a+\hat{k}_a)=\lambda\mathbf{1}+\mathbf{A}^{\hat{k}}$ with~$n_a$ a
covector normal to level-sets of constant~$t$, so that~$n_a$
and~$\hat{k}_a$ are not colinear, ensuring a genuine separation
between temporal and spatial directions. We refer to~$\mathbf{M}$ as
the principal pencil (or just the pencil) of the system. Intuitively,
one may then think of the scalar parameter~$\lambda$ as the
pseudodifferential (frequency domain) representation of the time
derivative. This reformulation is useful because it allows for a {\it
  coordinate-free} spacetime notion of strong hyperbolicity, rather
than one tied to a specific time coordinate. In this context it is
natural to consider systems with the more general pencil of the
form~$\mathbf{M}(\lambda)=\mathfrak{N}^a(\lambda
n_a+\hat{k}_a)=\lambda\mathbf{A}^n+\mathbf{A}^{\hat{k}}$,
with~$\mathbf{A}^n$ invertible, in which case the definition of strong
hyperbolicity given above is required to be satisfied
by~$(\mathbf{A}^n)^{-1}\mathbf{A}^{\hat{k}}$. The pencil based
viewpoint is also particularly convenient in the presence of
constraints. A discussion of strong hyperbolicity for constrained
systems in terms of matrix pencils can be found
in~\cite{AbaReu18,Aba22,AbaReuHil24}.

In applications PDEs are often not given in first order form, so it is
important to have a version of this result that holds for high-order
systems that are linear with constant coefficients. For this reason,
definitions of strong-hyperbolicity have been extended to higher-order
derivative systems, see~\cite{NagOrtReu04,GunGar05,HilRic13a} for
examples developed in the context of general relativity (GR). There
are different flavors, but these extended definitions invariably make
use of reduction to first order as a technical crutch, by introducing
variables for {\it all} derivatives of the primitive variables but the
highest. The advantage of formulating definitions of hyperbolicity
directly on the original high-order system is that we then do not have
to deal with the large number of artificial variables and constraints
that arise from a reduction to first order. The core idea behind these
articles is then to identify within the original higher-order system
the piece of the equations of motion that correspond to (a particular
subblock of) the principal symbol of {\it any} first order reduction,
and eventually to establish necessary and sufficient conditions for
well-posedness in a suitable Sobolev norm, which simply corresponds to
the~$L^2$ norm of the state vector of the first order reduction.

There are interesting {\it special cases} in which building a generic
first order reduction by this recipe results in ill-posed PDEs but
where, by instead carefully cherry picking the reduction variables
well-posedness can be obtained. In such cases the original system will
be ill-posed in a Sobolev norm that contains all derivatives as in a
generic first-order reduction but well-posed in a norm with only these
carefully chosen derivatives. For examples in the context of
gravitation, see~\cite{GiaHilZil20,GiaBisHil21,Gun24}.

In this paper we restrict our attention to systems of second order. In
particular, we consider models of the form
\begin{align}
\mathbf{A}^{ab}\partial_a\partial_b\mathbf{u}+\mathbf{f}=0,
\end{align}
assuming that there exist coordinates in which~$\mathbf{A}^{tt}$ is
invertible. We call such a system fully second order. We discuss below
the sense in which fully second order systems are a special case of
first-order-in-time-second-order-in-space systems. Following the
approach just outlined, we consider a pseudodifferential first order
reduction, with the associated first order
pencil~$\mathbf{M}(\lambda)$, similar to the one obtained
from~\eqref{eqn:FT1S}. This reduction avoids the introduction of
artificial constraints that occur when one works in physical space. We
then relate the eigenstructure of~$\mathbf{M}(\lambda)$ to that of the
second-order matrix pencil~$\mathbf{S}(\lambda)$, and finally
characterize hyperbolicity solely in terms
of~$\mathbf{S}(\lambda)$. This turns out to be convenient, as it
substantially reduces the computational effort relative to the
standard hyperbolicity definitions and enables a more compact
presentation of earlier calculations. Interestingly, we find that any
strongly hyperbolic fully second order system has a second order
pencil which can be written as the product of two first order
principal pencils, each of which fulfill the requirement of
strong-hyperbolicity for first order systems. Yet the converse of this
result is false, as the repeated application of strongly-hyperbolic
operators of the form~\eqref{eqn:FT1S} gives rise to fully second
order systems that are only weakly hyperbolic (see
Theorem~\ref{theorem_D2}). To illustrate the utility of these ideas we
apply our results to model problems, culminating with an examination
of the Maxwell equations with potentials and a very general gauge
fixing.

These considerations are of particular interest in the formulation of
theories of gravity. Classic results guarantee well-posedness of the
Cauchy problem in GR, with developments concerning the treatment of
gauge, the initial boundary value problem and numerical
approximation~\cite{Reu04,SarTig12,HilRic13}. Yet because of the
advent of gravitational wave astronomy, there is great interest in
theories whose principal part has a vastly different, more complicated
structure than that of GR. These theories are typically arrived at by
considering changes to the action of GR (for reviews,
see~\cite{DefTsu10, BerBarCar15}). Restricting to second order
derivative field equations, this may be achieved with or without
introducing additional nonminimally coupled fields beyond the metric,
as in Horndeski and Lovelock theories respectively. Higher derivative
theories are also under active consideration. In any case, to make
sense of dynamics a well-posedness result is needed for the initial
value problem for each theory. These results are necessary for
successful computational treatment, and have therefore received much
attention, see for instance~\cite{DelHilWit14, PapRea17, KovRea20,
  KovRea20a, SarBarPre19, ReaDav21, Rea21, ShuAreTha25}. There are too
many individual studies to meaningfully summarize here, but suffice to
say that numerical work is also coming forward in leaps and bounds
with scalar Gauss-Bonnet gravity, which is of second derivative order,
proving particularly popular~\cite{WitGuaPan20, AntLehVen21,
  DonVanYaz22, FraBezBar22, EllHecWit22, DonAreYaz24, ThaFraBez24,
  ThaFraBez25} due to the richness of the phenomenology that it
entails.

For all of this it is helpful both to have a statement of
hyperbolicity that can be checked as easily as possible, but also to
know when the approach taken in GR will fail. Though modest, our
results are useful in both directions. Our calculations for the
Maxwell equations demonstrate the former. Regarding the latter,
consider for instance the results of~\cite{FigHelKov24}, in which the
authors use a careful reduction of high-order operators to along the
lines mentioned in the interesting special cases emphasized above. In
view of our results for second-order systems, it is natural to expect
that, in beyond-GR theories involving repeated applications of
hyperbolic operators, one may need a tailored reduction (such as that
carried out in~\cite{FigHelKov24}) to retain hyperbolicity and thereby
obtain hyperbolic equations of motion.

The paper is structured as follows. In section~\ref{section:SHFORD} we
formulate hyperbolicity conditions on the pencil of a first order in
time reduction of a fully second order system, and introduce the
second order pencil~$\mathbf{S}(\lambda)$. In
Sections~\ref{section:First_Second_Pencils}-\ref{section:Pencil_Diagonalization}
we examine the relationship between the eigenvectors of our first and
second order pencils, observing that the pencils of fully second order
systems naturally decompose as a product of well-behaved first order
pencils. In section~\ref{section:def_strong_hyp_second_ord_1}, we
introduce a useful definition of strong hyperbolicity for second order
PDEs. In section~\ref{section:Applications}. we present applications,
including a treatment of the Maxwell equations with a very general
gauge fixing. We conclude in section~\ref{section:Conclusions}.

%%%%%%%%%%%%%%%%%%%%%%%%%%%%%%%%%%%%%%%%%%%%%%%%%%%%%%%%%%%%%
\section{Strong hyperbolicity at first order in time
  derivatives \label{section:SHFORD}}
%%%%%%%%%%%%%%%%%%%%%%%%%%%%%%%%%%%%%%%%%%%%%%%%%%%%%%%%%%%%%

We consider a manifold~$M=\left(-T,T\right)\times\Sigma$
for~$T\in\mathbb{R}$ with~$\dim M=d$ and~$\dim\Sigma=d-1$. We assume
the existence of coordinates~$x^{a}=\left( t,x^{i}\right)$ such that
spatial coordinates $x^{i}$ are adapted to the hypersurface~$\Sigma$
with the time coordinate~$t$ satisfying~$t\in\left( -T,T\right)$.

We focus on second-order PDEs as
\begin{equation}
  \mathfrak{N}^{ab}\left(x,\phi,\partial\phi\right)  \partial_{a}\partial
  _{b}\mathbf{\phi}+\dots=0, \label{eq_K_1}
\end{equation}
where the ellipses represent lower order derivative terms,
$\mathfrak{N}^{ab}l_{a}l_{b}$ is an~$N\times N$ matrix and~$\phi$
an~$N$-vector. We employ Einstein's summation convention where
repeated indices sum over the natural range implied by the type of
index.

As mentioned in the introduction, we concentrate in this article on
algebraic conditions for the well-posedness of the initial value
problem associated with PDEs such as~\eqref{eq_K_1}. For this
analysis, it is unnecessary to include lower-order terms, allowing us
to focus on a linearized version of the equations,
where~$\mathfrak{N}^{ab}$ can be assumed to be a constant coefficient
matrix. To study the nonlinear setting, additional smoothness
conditions are required, which are not covered in the present
work. For more details about these conditions, see~\cite{KreLor89,
  GusKreOli95, NagOrtReu04}.

Under these assumptions, the system then simplifies
to~$\mathfrak{N}^{ab}\partial_{a}\partial_{b}\phi=0$.
Considering~$n_{a}$ orthogonal to~$\Sigma$ we
have~$n_a\propto-\partial_at$, and the equation can be rewritten as
\begin{align}
  \big(  \mathbf{A}\partial_{t}^{2}+\mathbf{B}^{i}\partial_{i}\partial_{t}
  +\mathbf{C}^{ij}\partial_{i}\partial_{j}\big) \phi=0, \label{eq_1_2_orden_1}
\end{align}
where~$\mathbf{A}\equiv\mathfrak{N}^{ab}n_{a}n_{a}$,
$\mathbf{B}^{i}\equiv2\mathfrak{N}^{ib}n_{b}$
and~$\mathfrak{N}^{ij}\equiv \mathbf{C}^{ij}$. We assume
that~$\mathbf{A}$ is invertible, and call such a system a fully second
order. In addition, we assume initial data
for~$\phi\left(t=0,x\right)$ and~$\partial_{t}\phi( t=0, x )$.

To discuss the hyperbolicity of these equations, we need to reduce
them to first order in derivatives, at least with respect to time. For
this purpose, we define the variable~$\omega$ as
\begin{align}
\partial_t\phi-\omega=0.
\end{align}
The new set of equations can be written as a block matrix–vector
product
\begin{align}
  \left[
    \begin{array}
      [c]{cc}
      \mathbf{1}\partial_{t}
      & -\mathbf{1}\\
      \mathbf{A}^{-1}\mathbf{C}^{ij}\partial_{i}\partial_{j}
      & \mathbf{1}\partial_{t}+\mathbf{A}^{-1}\mathbf{B}^{i}\partial_{i}
    \end{array}
  \right]  \left[
    \begin{array}
      [c]{c}
      \phi\\
      \omega
    \end{array}
  \right]  =0, \label{Eq_dif_fo_1}
\end{align}
where~$\mathbf{1}$ is the~$N\times N$ identity matrix. They define a
complete set of evolution equations for the
variables~$\mathbf{u}=(\phi,\omega)^T$.

Following~\cite{NagOrtReu04}, we now rewrite these equations in a
pseudodifferential form. We take a Fourier transform in the spatial
coordinates, writing the wave vector as~$k_{i}=\left\vert k\right\vert
\hat{k}_{i}$, with~$\hat{k}_{i}$ normalized and~$\left\vert
k\right\vert =\sqrt{\delta^{ij} k_{i}k_{j}}$ the norm ($\delta^{ij}$
is a Riemannian product), and where $\hat{\Phi}=i\left\vert
k\right\vert \hat{\phi }$, we obtain
\begin{align}
  \left[
  \begin{array}
    [c]{cc}
    \mathbf{1}\partial_{t}
    & -i\left\vert k\right\vert \mathbf{1}\\
    i\left\vert k\right\vert \mathbf{A}^{-1}\mathbf{C}^{ij}\hat{k}_{i}\hat{k}_{j}
    & \partial_{t}+i\left\vert k\right\vert \mathbf{A}^{-1}\mathbf{B}^{i}\hat{k}_{i}%
  \end{array}
      \right]  \left[
\begin{array}
  [c]{c}
  \hat{\Phi}\\
  \hat{\omega}
\end{array}
  \right]  =0.
\end{align}

Finally, equation~\eqref{eq_1_2_orden_1} is strongly hyperbolic if the
principal symbol
\begin{align}
i\left\vert k\right\vert \left[
\begin{array}
  [c]{cc}
  0
  & -\mathbf{1}\\
  \mathbf{A}^{-1}\mathbf{C}^{ij}\hat{k}_{i}\hat{k}_{j}
  & \mathbf{A}^{-1}\mathbf{B}^{i}\hat{k}_{i}%
\end{array}
\right]  . 
\end{align}
is diagonalizable with purely complex eigenvalues, and certain
uniformity conditions is satisfied (see~\cite{Kre70, NagOrtReu04,
  HilRic10}).

We aim to provide a definition using matrix pencils. To achieve this,
we can suppress the global factor~$i\left\vert k\right\vert$, define
the first order pencil
\begin{align}
  \mathbf{M}(\lambda) \equiv \left[
  \begin{array}
    [c]{cc}
    \lambda \mathbf{1} & -\mathbf{1}\\
    \mathbf{A}^{-1}\mathbf{C} & \lambda \mathbf{1}+\mathbf{A}^{-1}\mathbf{B}
  \end{array}
              \right], \label{Mat_M_1}
\end{align}
where~$\lambda$ is a parameter, $\mathbf{C}\equiv
\mathbf{C}^{ij}\hat{k}_{i}\hat{k}_{j}$ and~$\mathbf{B}\equiv
\mathbf{B}^{i}\hat{k}_{i}$, and give the following equivalent
definition of strong hyperbolicity.

\begin{definition}
  \label{defn:weak_strong_first_order}
  Eqn.~\eqref{eq_1_2_orden_1} is called weakly hyperbolic with respect
  to~$n_a$ if the associated matrix pencil~$\mathbf{M}(\lambda)$ has
  real eigenvalues. It is called strongly hyperbolic with respect
  to~$n_a$ if we furthermore have:
  
  a) $\mathbf{M}(\lambda)$ is diagonalizable, so that
  \begin{align}
    \mathbf{M}(\lambda) = \mathbf{P}
    ( \lambda \mathbf{1}-\mathbf{D}) \mathbf{P}^{-1}, \label{M_diag_1}
  \end{align}
  for some matrices~$\mathbf{P}$ and~$\mathbf{D}$ of~$2N\times2N$,
  both independent of~$\lambda$, with~$\mathbf{D}$ diagonal and real,
  and with~$\mathbf{1}$ the identity of the same size.

  b) $\left\Vert \mathbf{P}\right\Vert \leq C_{1}$, $\left\Vert
  \mathbf{P}^{-1}\right\Vert \leq C_{2}$, where $C_{1,2}$ are two
  constants independent of~$\hat{k}_{i}$ (Uniformity condition).
\end{definition}

Here we work with a definition for Eqn.~\eqref{eq_1_2_orden_1}
employing the pseudodifferential form of equation~\eqref{Eq_dif_fo_1};
nevertheless, there is an equivalent definition, with the same
conditions, in term of non-pseudodifferential equations obtained by
reducing the system to first-order introducing all derivatives
of~$\phi$ as new variables. See~\cite{GunGar05} for more details.

Observe that a) means that~$\mathbf{M}(0)$ is (Jordan) diagonalizable
with real eigenvalues, and that the introduction of
the~$\lambda$-parameter does not change this diagonalization. This
formulation of the definition is helpful because it permits us to keep
a spacetime (covariant) view even in the frequency domain, albeit with
the cost of working with matrix pencils.

On the other hand, the matrix~$\mathbf{P}=\mathbf{P}(\hat{k}_{i})$ and
its inverse do not depend on~$\left\vert k\right\vert $. Rather,
since~$\mathbf{M}(\lambda)$ depends only on the normalized wave
vector~$\hat{k}_{i}$, so do~$\mathbf{P}$ and~$\mathbf{P}^{-1}$. This
means that~$\hat{k}_{i}$ resides on a sphere (a compact set), then
if~$P$ and~$P^{-1}$ are continuous on~$\hat{k}_{i}$, we know by
topology that their norms have a maximum on this sphere, and the
uniformity condition is satisfied.

To motivate the definition of~$\mathbf{M}(\lambda)$, observe that it
can be deduced from Eqn.~\eqref{Eq_dif_fo_1}, by proposing
\begin{align}
\left[
\begin{array}
	[c]{c}
	\phi\\
	\omega
\end{array}
\right]  =e^{\left(  \lambda t+\hat{k}_{i}x^{i}\right)  }\left[
\begin{array}
	[c]{c}%
	\delta\phi\\
	\delta\omega
\end{array}
\right]
\end{align}
from which we obtain the characteristic equation
\begin{align} 
\mathbf{M}\left(  \lambda\right)  \left[
\begin{array}
	[c]{c}%
	\delta\phi\\
	\delta\omega
\end{array}
\right]  =0. \label{M_1_p_w}
\end{align} 
In summary, we can say that strong hyperbolicity focus on the study of
this equation, solving, for any~$\hat{k}_{i}$, for~$\lambda$,
$\delta\phi$ and~$\delta\omega$.

Finally, to simplify more the discussion, if we forget the
introduction of~$\omega$, the characteristic second order equation can
be obtained substituting~$\phi=e^{\left( \lambda
  t+\hat{k}_{i}x^{i}\right)}\delta\phi$ in
equation~\eqref{eq_1_2_orden_1}. It is
\begin{align}
	\mathbf{S}( \lambda) \delta\phi = 0,  \label{cha_S_1}
\end{align}
with 
\begin{align}
  \mathbf{S}(\lambda)
  \equiv (  \lambda^{2}\mathbf{1}+\lambda \mathbf{A}^{-1}\mathbf{B}
  +\mathbf{A}^{-1}\mathbf{C})  \label{def_S_l_1}
\end{align}
and where we call~$\mathbf{S}(\lambda)$ the second order pencil.

We will see next that solving Eqn.~\eqref{cha_S_1} for~$\lambda$
and~$\delta\phi$ is sufficient to solve~\eqref{M_1_p_w} and conclude
the strong hyperbolicity of system~\eqref{eq_1_2_orden_1}.

%%%%%%%%%%%%%%%%%%%%%%%%%%%%%%%%%%%%%%%%%%%%%%%%%%%%%%%%%%%%%
\section{First and second order pencils}
\label{section:First_Second_Pencils}
%%%%%%%%%%%%%%%%%%%%%%%%%%%%%%%%%%%%%%%%%%%%%%%%%%%%%%%%%%%%%

In this section we collect definitions, lemmas and theorems that we
will rely on afterwards.

\begin{remark}
  \label{remark_0} Considering the first~$N$ rows
  of~$\mathbf{M}(\lambda)$, it is easy to conclude that if~$\det
  \mathbf{M}(\lambda_1)=0$ for some~$\lambda_1$ then the right kernel
  of~$\mathbf{M}\left(\lambda_1\right) $ is forced to take the
  form~$\left[v,\, v\lambda_1\right]^T$, for some $N-$vector~$v\neq0$.
\end{remark}

Following this remark, we call the
pair~$\left(\lambda_1,\left[v_1,\,v_1\lambda_1\right]^T\right)$ an
eigenvalue-eigenvector, or eigenpair, of~$\mathbf{M}\left(
\lambda\right) $ if they satisfy
\begin{align}
  \mathbf{M}\left(  \lambda_1\right)
  \left[
  \begin{array}
    [c]{c}
    v_1\\
    v_1\lambda_1
  \end{array}
  \right]  =0.
\end{align}
This name is inherited since~$\left( -\lambda_1,\left[v_1,\,
  v_1\lambda_1\right]^T\right) $ is a standard eigenpair
of~$\mathbf{M}(0)$.

\begin{remark}
  \label{remark_1} Observe that for any nonvanishing~$v$, and
  any~$\lambda$,
\begin{align}
  \mathbf{M}(\lambda)
  \left[
  \begin{array}
    [c]{c}
    v\\
    v\lambda
  \end{array}
  \right]  =\left[
  \begin{array}
    [c]{c}%
    0\\
\mathbf{S}(\lambda)v
\end{array}
\right],
\end{align}
then, $\left( \lambda_{1},\left[v_1,\,v_1\lambda_1\right]\right) $ is
an eigenpair of~$\mathbf{M}(\lambda)$ if and only if
\begin{align}
  \mathbf{S}(\lambda_1)v_1=0.
\end{align}
\end{remark}

Accordingly, we refer to~$\left(\lambda_1,v_1\right)$ as an eigenpair
of~$\mathbf{S}(\lambda)$ if~$\mathbf{S}(\lambda_1)v_1=0$
and~$v_1\neq0$.

\begin{remark}
  \label{remark_2} We recall that if the first order pencil
  $\mathbf{M}( \lambda) $ has two eigenpairs~$\left(\lambda_1,u_1
  \equiv \left[v_1,\,v_1\lambda_1\right]^T\right)$ and $\left(
  \lambda_2,u_2\equiv\left[v_2,\,v_2\lambda_2\right]^T\right)$
  with~$\lambda_1\neq\lambda_2$ then~$u_{1}$ is linearly independent
  of~$u_2$. This is not necessary true for the second order pencil
  $\mathbf{S}(\lambda)$, which may have eigenpairs~$(\lambda_1,v_1)$
  and~$(\lambda_2,v_2)$ with~$\lambda_1\neq\lambda_2$, where~$v_1$
  and~$v_2$ are linearly dependent.
\end{remark}

\begin{remark}
  \label{remark_det_mat_1} Following~\cite{Ber09} (See the section
  ``Facts on the Determinant of Partitioned Matrices, 2.14.13''),
  when~$\mathbf{A}_{1,2,3,4}$ are~$N\times N$ matrices,
  with~$\mathbf{A}_1\mathbf{A}_2=\mathbf{A}_2\mathbf{A}_1$, then
  \begin{align}
    \det\left(
    \begin{array}
      [c]{cc}
      \mathbf{A}_{1} & \mathbf{A}_{2}\\
      \mathbf{A}_{3} & \mathbf{A}_{4}
    \end{array}\right)
    =\det(\mathbf{A}_{4}\mathbf{A}_{1}-\mathbf{A}_{3}\mathbf{A}_{2})  .
  \end{align}
\end{remark}

We thus conclude that 
\begin{equation}
  \det(\mathbf{M}(\lambda))=\det(\mathbf{S}(\lambda)), \label{detM_detS_1}
\end{equation}
for any value of~$\lambda$.

\begin{remark}
  \label{rem_alg_geo} We recall that~$\mathbf{M}(\lambda)$ is
  diagonalizable if and only if the geometrical and algebraic
  multiplicity of each eigenvalue of~$\mathbf{M}(\lambda)$ coincide.
\end{remark}

The algebraic~$q_{i}$ and geometric~$r_{i}$ multiplicities
of~$\mathbf{M}(\lambda) $ are defined as follows. Denoting
by~$\sigma_{1},...\sigma_{k}$ the distinct eigenvalues
of~$\mathbf{M}(\lambda)$, then
\begin{align}
  r_{i} \equiv \dim\left(\ker \mathbf{M}(\sigma_{i})\right),
\end{align}
and
\begin{equation}
  \det\left(\mathbf{M}(\lambda)\right)  =\left(\lambda-\sigma
    _{1}\right)^{q_{1}}\left(  \lambda-\sigma_{2}\right)^{q_{2}}...\left(
    \lambda-\sigma_{k}\right)  ^{q_{k}}\,. \label{detM_s_1}
\end{equation}
with~$q_1+...+q_k=2N$. For fixed~$i$, each~$q_{i}$ and~$r_{i}$ is
associated with~$\sigma_i$.

On the other hand, following Eqn.~\eqref{detM_detS_1},
\begin{equation}
\det\left(\mathbf{S}(\lambda)\right) = \left( \lambda-\sigma
_{1}\right)^{q_{1}}\left(  \lambda-\sigma_{2}\right)^{q_{2}}\dots\left(
\lambda-\sigma_{k}\right)  ^{q_{k}}\,. \label{detS_s_1}
\end{equation}
This motivates us to also refer to the~$q_{i}$'s as the algebraic
multiplicities of~$\mathbf{S}(\lambda)$ and to
\begin{align}
  s_{i}\equiv\dim\left(\ker\mathbf{S}(\sigma_{i})\right),
\end{align}
as its geometric multiplicities.

As one of our results, we present the following lemma indicating that
the geometric multiplicities (of each eigenvalue)
of~$\mathbf{M}(\lambda) $ and~$\mathbf{S}(\lambda)$ are the same, that
is
\begin{align}
r_{i}=s_{i}\,.
\end{align}

\begin{lemma} \label{lemma_1} For any~$\lambda$,
\begin{equation}
  \dim\left(\ker \mathbf{M}(\lambda)\right)  =\dim\left(\ker\mathbf{S}\left(
      \lambda\right)\right)\,. \label{eq_det_1}
\end{equation}

\end{lemma}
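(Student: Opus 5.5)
The plan is to write down an explicit linear isomorphism between $\ker\mathbf{M}(\lambda)$ and $\ker\mathbf{S}(\lambda)$, valid for every fixed $\lambda$, whether or not $\lambda$ is an eigenvalue. Everything needed is already contained in Remarks~\ref{remark_0} and~\ref{remark_1}.

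\textbf{Step 1: the shape of $\ker\mathbf{M}(\lambda)$.} Fix $\lambda$ and take any vector $[x,y]^T\in\ker\mathbf{M}(\lambda)$, with $x,y\in\mathbb{C}^N$. The first block row of \eqref{Mat_M_1} reads $\lambda x-y=0$, so $y=\lambda x$. This is exactly the content of Remark~\ref{remark_0}, and it holds for all vectors of the kernel, not just one. Hence
\begin{align}
  \ker\mathbf{M}(\lambda)\subseteq\left\{\left[v,\,\lambda v\right]^T : v\in\mathbb{C}^N\right\}.
\end{align}

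\textbf{Step 2: identifying the kernel.} Define the linear map $\iota_\lambda:\mathbb{C}^N\to\mathbb{C}^{2N}$ by $\iota_\lambda(v)=[v,\lambda v]^T$. It is injective, since the first component recovers $v$. By Remark~\ref{remark_1},
\begin{align}
  \mathbf{M}(\lambda)\,\iota_\lambda(v)=\left[0,\;\mathbf{S}(\lambda)v\right]^T,
\end{align}
so $\iota_\lambda(v)\in\ker\mathbf{M}(\lambda)$ if and only if $v\in\ker\mathbf{S}(\lambda)$. Together with Step 1, this shows that $\iota_\lambda$ restricts to a bijection from $\ker\mathbf{S}(\lambda)$ onto $\ker\mathbf{M}(\lambda)$. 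A bijective linear map preserves dimension, which gives \eqref{eq_det_1}.

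When $\lambda$ is not an eigenvalue, both kernels are trivial; this is consistent with \eqref{detM_detS_1}, and the argument above covers that case without any separate treatment.

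There is no real obstacle here. The only point needing care is to apply the first-block-row constraint to every kernel element, so that $\iota_\lambda$ is genuinely onto $\ker\mathbf{M}(\lambda)$, rather than only to a single eigenvector as Remark~\ref{remark_0} is phrased.
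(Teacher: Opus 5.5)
Your proof is correct and is essentially the paper's argument: both rest on Remarks~\ref{remark_0} and~\ref{remark_1}, i.e.\ on the injective map $v\mapsto[v,\,\lambda v]^T$ carrying $\ker\mathbf{S}(\lambda)$ onto $\ker\mathbf{M}(\lambda)$. The paper unpacks this as a basis argument (first $r_1\le s_1$, then ruling out $r_1<s_1$ by contradiction) and treats non-eigenvalues separately via $\det\mathbf{M}(\lambda)=\det\mathbf{S}(\lambda)$, whereas you state the isomorphism directly, so no case split is needed.
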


\begin{proof}
  Following Eqn.~\eqref{detM_detS_1}, when~$\lambda\neq\sigma_{i}$,
  then~$0=\dim\left( \ker \mathbf{M}( \lambda) \right) =\dim\left(
  \ker \mathbf{S}(\lambda) \right) =0$. It remains to prove that
\begin{align}
  r_{i}\equiv\dim\left( \ker \mathbf{M}(\sigma_{i})\right)
  =\dim\left(\ker\mathbf{S}(\sigma_{i}) \right)  \equiv s_{i}
\end{align}
for each~$i=1,...,k$. We show this for the eigenvalue~$\sigma_{1}$
(that is, for~$r_{1}=s_{1}$). The proof for the other eigenvalues is
identical.

We recall that following remark~\ref{remark_0}, the elements
of~$\ker(\mathbf{M}(\sigma_{1}))$ has the form~$[v,\, v\sigma_{1}]^T
=[I,\, \sigma_{1}I]^Tv$, and following remarks~\ref{remark_0}
and~\ref{remark_1}, we know that $[I,\,
  \sigma_{1}I]^Tv\in\ker(\mathbf{M}( \sigma_{1})) \Leftrightarrow
v\in\ker(\mathbf{S}(\sigma_{1}))$.

Considering the basis of~$\ker\left( M\left( \sigma_{1}\right)
\right)$ given by
\begin{align}
\textrm{span}\left\langle\left[
\begin{array}
[c]{c}
\mathbf{1}\\
\sigma_{1}\mathbf{1}
\end{array}
\right]  v_{1},\left[
\begin{array}
[c]{c}
\mathbf{1}\\
\sigma_{1}\mathbf{1}
\end{array}
\right]  v_{2},\dots,\left[
\begin{array}
[c]{c}
\mathbf{1}\\
\sigma_{1}\mathbf{1}
\end{array}
\right] v_{r_{1}}\right\rangle =\ker(\mathbf{M}(\sigma_{1})),
\end{align}
with the~$[\mathbf{1},\, \mathbf{1}\sigma_{1}]^Tv_{i}$ linearly
independent, we will show that the vectors~$v_{i}$ are linearly
independent.

We consider a linear combination of the vectors~$[\mathbf{1},\,
  \sigma_{1}\mathbf{1}]^Tv_{i}$, and notice that
\begin{align}
\alpha_{1}\left[
\begin{array}
[c]{c}
\mathbf{1}\\
\sigma_{1}\mathbf{1}
\end{array}
\right]  v_{1}+\dots+\alpha_{r_{1}}\left[
\begin{array}
[c]{c}
\mathbf{1}\\
\sigma_{1}\mathbf{1}
\end{array}
\right]  v_{r_{1}}=\left[
\begin{array}
[c]{c}%
\mathbf{1}\\
\sigma_{1}\mathbf{1}
\end{array}
\right]\left(\alpha_{1}v_{1}+\dots+\alpha_{r_{1}}v_{r_{1}}\right)  .
\label{eq_lc_1}
\end{align}
It is clear that the left hand size vanishes if and only if
\begin{align}
0=\left(  \alpha_{1}v_{1}+...+\alpha_{r_{1}}v_{r_{1}}\right),
\end{align}
but, in addition, the
vectors~$[\mathbf{1},\,\sigma_{1}\mathbf{1}]^Tv_{i}$ are linear
independent, then the LHS vanishes if and only
if~$\alpha_{1}=\dots=\alpha_{r_{1}}=0$. We conclude from these
equivalences that the vectors $v_{1},...,v_{r_{1}}$ are linearly
independent. In addition, since we have~$v_{i}\in\ker
\mathbf{S}(\sigma_{1})$, we conclude that~$r_{1}\leq s_{1}$.

We show now that if~$r_{1}<s_{1}$, we arrive to a contradiction and
finally conclude that~$r_{1}=s_{1}$. If we assume $r_{1}<s_{1}$, then
there exists~$w\in\ker\mathbf{S}(\sigma_{1})$ which is linearly
independent from~$v_{1},\dots,v_{r_{1}}$. Therefore~$[\mathbf{1},
  \sigma_{1}\mathbf{1}]^Tw\in\ker \mathbf{M}(\sigma_{1})$, but
vector~$[ \mathbf{1}, \sigma_{1}\mathbf{1}]w$ and vectors~$[
  \mathbf{1}, \mathbf{1}\sigma_{1}]^Tv_{i}$ can not be linearly
independent since~$\dim(\ker\mathbf{M}(\sigma_{1}))=r_{1}$. We
conclude that there exists~$\beta_{1}=\dots=\beta_{r_{1}+1}$ not all
of them vanishing, such that
\begin{align}
0=\beta_{1}\left[
\begin{array}
[c]{c}
\mathbf{1}\\
\sigma_{1}\mathbf{1}
\end{array}
\right] v_{1}+...+\beta_{r_{1}}\left[
\begin{array}
[c]{c}
\mathbf{1}\\
\sigma_{1}\mathbf{1}
\end{array}
\right] v_{r_{1}}+\beta_{r_{1}+1}\left[
\begin{array}
[c]{c}
\mathbf{1}\\
\sigma_{1}\mathbf{1}
\end{array}
\right]  w
\end{align}
or
\begin{align}
0=\left[
\begin{array}
[c]{c}
\mathbf{1}\\
\sigma_{1}\mathbf{1}
\end{array}
\right]  \left(  \beta_{1}v_{1}+...+\beta_{r_{1}}v_{r_{1}}+\beta_{r_{1}%
+1}w\right)
\end{align}
which means that
\begin{align}
0=\left(  \beta_{1}v_{1}+...+\beta_{r_{1}}v_{r_{1}}+\beta_{r_{1}+1}w\right)
\end{align}
and that~$v_{1},...,v_{r_{1}},w$ are linearly dependent. This is a
contradiction since we started assuming that~$v_{1},...,v_{r_{1}},w$
are linearly independent. The contradiction comes from
assuming~$r_{1}<s_{1}$, so we conclude that~$r_{1}=s_{1}$.

\end{proof}

We have thuse seen that~$\mathbf{M}(\lambda)$
and~$\mathbf{S}(\lambda)$ have the same eigenvalues, algebraic and
geometric multiplicities (see
Eqs.~\eqref{detM_detS_1},\eqref{detM_s_1}
and~\eqref{detS_s_1}). Therefore, recalling remark~\ref{rem_alg_geo},
we conclude with:

\begin{theorem} \label{theo_1} $\mathbf{M}(\lambda)$ is diagonalizable
  if and only if the geometric and algebraic multiplicities of each
  eigenvalue of~$\mathbf{S}(\lambda) $ are equal.
\end{theorem}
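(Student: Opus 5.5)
The plan is to reduce Theorem~\ref{theo_1} to a chain of equivalences built from Remark~\ref{rem_alg_geo}, Eqn.~\eqref{detM_detS_1} and Lemma~\ref{lemma_1}. First I would make precise what the eigenvalues of the pencil $\mathbf{M}(\lambda)$ are. Since $\mathbf{M}(\lambda)=\lambda\mathbf{1}+\mathbf{M}(0)$, a value $\sigma$ is a root of $\det\mathbf{M}(\lambda)$ if and only if $-\sigma$ is an eigenvalue of the $2N\times 2N$ matrix $-\mathbf{M}(0)$. Moreover, $\ker\mathbf{M}(\sigma)=\ker(\mathbf{M}(0)+\sigma\mathbf{1})$. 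Hence the multiplicities $q_i$ and $r_i$ defined through Eqn.~\eqref{detM_s_1} are exactly the usual algebraic and geometric multiplicities of the matrix $\mathbf{M}(0)$ at the eigenvalue $-\sigma_i$. By Remark~\ref{rem_alg_geo}, $\mathbf{M}(\lambda)$ (equivalently $\mathbf{M}(0)$, in the sense of Definition~\ref{defn:weak_strong_first_order}a with $\mathbf{D}$ not yet required real) is therefore diagonalizable if and only if $q_i=r_i$ for every $i$.

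Next I would transfer each multiplicity to $\mathbf{S}(\lambda)$. By Eqn.~\eqref{detM_detS_1}, $\det\mathbf{S}(\lambda)=\det\mathbf{M}(\lambda)$ as polynomials in $\lambda$. So the two pencils have the same roots $\sigma_1,\dots,\sigma_k$ with the same exponents $q_i$, which is Eqn.~\eqref{detS_s_1}; this justifies calling the $q_i$ the algebraic multiplicities of $\mathbf{S}$. Lemma~\ref{lemma_1} evaluated at $\lambda=\sigma_i$ then gives $r_i=s_i$.

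Combining the two steps, for each $i$ we have $q_i=r_i \iff q_i=s_i$, and taking the conjunction over all $i$ gives the theorem in both directions at once.

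The main obstacle is not computational, since Lemma~\ref{lemma_1} already does the real work. The delicate point is the first step: making sure that ``diagonalizable'' for the pencil really means the matrix statement to which Remark~\ref{rem_alg_geo} applies. I would verify this explicitly. If $\mathbf{M}(0)=-\mathbf{P}\mathbf{D}\mathbf{P}^{-1}$, then $\mathbf{M}(\lambda)=\mathbf{P}(\lambda\mathbf{1}-\mathbf{D})\mathbf{P}^{-1}$ with $\mathbf{P}$ independent of $\lambda$, and conversely setting $\lambda=0$ recovers a diagonalization of $\mathbf{M}(0)$. I would also note that the theorem concerns only diagonalizability (over $\mathbb{C}$ if necessary); realness of the eigenvalues and uniformity are separate conditions, not part of this statement.
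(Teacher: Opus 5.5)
Your proposal is correct and follows the paper's own argument. The paper also gets equal algebraic multiplicities from $\det\mathbf{M}(\lambda)=\det\mathbf{S}(\lambda)$, equal geometric multiplicities from Lemma~\ref{lemma_1}, and then concludes via Remark~\ref{rem_alg_geo}. Your explicit check that $\mathbf{M}(\lambda)=\lambda\mathbf{1}+\mathbf{M}(0)$ makes pencil diagonalizability the same as diagonalizability of $\mathbf{M}(0)$ spells out what the paper only notes in passing after Definition~\ref{defn:weak_strong_first_order}.

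One harmless sign slip: $\det\mathbf{M}(\sigma)=0$ means $\sigma$ (not $-\sigma$) is an eigenvalue of $-\mathbf{M}(0)$, i.e.\ $-\sigma$ is an eigenvalue of $\mathbf{M}(0)$. That is what you correctly use in your next sentence.
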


This theorem gives the crucial condition in our definition of strong
hyperbolicity for second order PDE (see
definition~\ref{def_strong_hyp_1}). Considering the relationship of
our setup with that for general first-order in time, second-order in
space systems, we have the following.

\begin{remark}
 We can instead write the system in first-order in time, second-order
 in space form
\begin{align}
  \partial_{t}v
  &=\mathbf{A}_{1}^{i}\partial_{i}v+\mathbf{A}_{2}w,
  \label{eqn:FT2S_1}\\
  \partial_{t}w
  &=\mathbf{B}_{1}^{ij}\partial_{i}\partial_{j}v
  +\mathbf{B}_{2}^{i}\partial_{i}w,\label{eqn:FT2S_2}
\end{align}
with~$\mathbf{A}_{2}$ invertible (see~\cite{GunGar05} for details) it
is then possible to show that the pencils
\begin{align}
\mathbf{M}(\lambda)\equiv\left[
\begin{array}
	[c]{cc}
  \lambda \mathbf{1}-\mathbf{A}_{1}^{i}\hat{k}_{i}
  & -\mathbf{A}_{2}\\
  -\mathbf{B}_{1}^{ij}\hat{k}_{i}\hat{k}_{j}
  & \lambda \mathbf{1}-\mathbf{B}_{2}^{i}\hat{k}_{i}%
\end{array}
\right],
\end{align}
and
\begin{align}
  \mathbf{S}(\lambda)\equiv\lambda^{2}\mathbf{1}-\left(  \mathbf{A}_{1}^{i}
  +\mathbf{A}_{2}\mathbf{B}_{2}^{i}\mathbf{A}_{2}^{-1}\right)
  \hat{k}_{i}\lambda+\left(\mathbf{A}_{2}\mathbf{B}_{2}^{i}
  \mathbf{A}_{2}^{-1}\mathbf{A}_{1}^{j}
  -\mathbf{A}_{2}\mathbf{B}_{1}^{ij}\right)  \hat{k}_{i}\hat{k}_{j},
\end{align}
satisfy~\eqref{detM_detS_1}, lemma~\ref{lemma_1} and
theorem~\ref{theo_1}. This can be demonstrated by following similar
steps to those shown here (in other words, setting~$v=\phi$,
and using~\eqref{eqn:FT2S_1} to define~$w$). The definition of strong
hyperbolicity adopted here (see definition~\ref{def_strong_hyp_1}) is
a special case of that of~\cite{GunGar05}, but has the advantage of
being formulated directly in terms of the second-order (principal)
pencil~$\mathbf{S}(\lambda)$.
\end{remark}

%%%%%%%%%%%%%%%%%%%%%%%%%%%%%%%%%%%%%%%%%%%%%%%%%%%%%%%%%%%%%
\section{Pencil Diagonalization} \label{section:Pencil_Diagonalization}
%%%%%%%%%%%%%%%%%%%%%%%%%%%%%%%%%%%%%%%%%%%%%%%%%%%%%%%%%%%%%

In this section, we present the explicit diagonalization of
pencils~$\mathbf{M}(\lambda)$ and~$\mathbf{S}(\lambda)$. For the first
order pencil~$\mathbf{M}(\lambda)$, following Eqn.~\eqref{M_diag_1},
we provide the explicit expression of the~$\mathbf{P}$ matrix. We
furthermore show that the second order pencil~$\mathbf{S}(\lambda)$
can be expressed as the product of two first order pencils. Each of
these two pencils are diagonalizable, with eigenvectors directly
related to~$\mathbf{P}$.

%%%%%%%%%%%%%%%%%%%%%%%%%%%%%%%%%%%%%%%%%%%%%%%%%%%%%%%%%%%%%
\subsection{Diagonalization of $M\left(\lambda\right)$}
%%%%%%%%%%%%%%%%%%%%%%%%%%%%%%%%%%%%%%%%%%%%%%%%%%%%%%%%%%%%%

\begin{remark}
  \label{det_A_B_C_D_1} Consider the square matrix with
  blocks~$\mathbf{A},\mathbf{B},\mathbf{C},\mathbf{D}$. Then
\begin{align}
\det\left[
\begin{array}[c]{cc}
\mathbf{A} & \mathbf{B}\\
\mathbf{C} & \mathbf{D}
\end{array}
             \right]
             =\det( \mathbf{A})
             \det(  \mathbf{D}-\mathbf{C}\mathbf{A}^{-1}\mathbf{B})
             \text{ \ iff
             \ }\mathbf{A} \text{ is invertible}
\end{align}
(see \cite{Ber09}).
\end{remark}

\begin{lemma}
  \label{Lemma_1} $\mathbf{M}( \lambda)$ is diagonalizable if and only
  if
\begin{align}
\mathbf{M}(\lambda)= \mathbf{P}\left[
\begin{array}
[c]{cc}%
\lambda \mathbf{1}-\mathbf{D}_{1} & 0\\
0 & \lambda \mathbf{1}-\mathbf{D}_{2}%
\end{array}
\right]  \mathbf{P}^{-1} \label{Eq_M_P_1}%
\end{align}
with
\begin{align}
\mathbf{P} &  \equiv\left[
\begin{array}
[c]{cc}%
\mathbf{V}_{1} & \mathbf{V}_{1}\mathbf{Q}\\
\mathbf{V}_{1}\mathbf{D}_{1} & \mathbf{V}_{1}\mathbf{Q}\mathbf{D}_{2}%
\end{array}
\right], 
\label{Eq_P_1_a}%\\
\end{align}
$\mathbf{1},\mathbf{V}_{1},\mathbf{Q},\mathbf{D}_{1},\mathbf{D}_{2}$
are~$N\times N$ matrices, $\mathbf{1}$ is the identity,
$\mathbf{D}_{1}$ and~$\mathbf{D}_{2}$ are diagonal, and
\begin{align}
  \det \mathbf{V}_{1} \neq0,  \qquad
  \det\left(\mathbf{Q}\mathbf{D}_{2} - \mathbf{D}_{1} \mathbf{Q}\right)
  \neq0 \label{detQD_1}
\end{align}
\end{lemma}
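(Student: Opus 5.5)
The plan is to prove the two directions separately. The reverse direction is essentially a determinant check. The forward direction consists of building $\mathbf{P}$ out of the eigenvectors given by Remark~\ref{remark_0}.

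\emph{($\Leftarrow$).} Suppose \eqref{Eq_M_P_1} holds with $\mathbf{P}$ as in \eqref{Eq_P_1_a} and the conditions \eqref{detQD_1}. The only thing to verify is that $\mathbf{P}$ is invertible. Then \eqref{Eq_M_P_1} is a diagonalization of the form \eqref{M_diag_1} with $\mathbf{D}=\mathrm{diag}(\mathbf{D}_1,\mathbf{D}_2)$. Since $\det\mathbf{V}_1\neq0$, Remark~\ref{det_A_B_C_D_1} applies with upper-left block $\mathbf{V}_1$, and gives
\begin{align}
\det\mathbf{P}=\det\mathbf{V}_1\,\det\big(\mathbf{V}_1\mathbf{Q}\mathbf{D}_2-\mathbf{V}_1\mathbf{D}_1\mathbf{V}_1^{-1}\mathbf{V}_1\mathbf{Q}\big)=(\det\mathbf{V}_1)^2\det(\mathbf{Q}\mathbf{D}_2-\mathbf{D}_1\mathbf{Q})\neq0 .
\end{align}

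\emph{($\Rightarrow$).} Assume $\mathbf{M}(\lambda)=\mathbf{P}'(\lambda\mathbf{1}-\mathbf{D})\mathbf{P}'^{-1}$. Then the columns of $\mathbf{P}'$ form $2N$ linearly independent eigenvectors: for each column $u$ with diagonal entry $d$, we have $\mathbf{M}(d)u=0$. By Remark~\ref{remark_0} each column therefore has the form $[v,\,vd]^T$. Stack the top halves into an $N\times 2N$ matrix $[\mathbf{W}\,|\,\mathbf{W}']$, so that $\mathbf{P}'=\left[\begin{smallmatrix}\mathbf{W}&\mathbf{W}'\\ \mathbf{W}\mathbf{D}_a&\mathbf{W}'\mathbf{D}_b\end{smallmatrix}\right]$.

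The $2N$ columns are independent, so the top block row $[\mathbf{W}\,|\,\mathbf{W}']$ has rank $N$. Hence we can choose $N$ columns whose top halves are linearly independent. Using a permutation (simultaneously permuting the columns of $\mathbf{P}'$ and the diagonal of $\mathbf{D}$, which preserves the diagonalization), move these $N$ columns to the first block. Call the resulting top halves $\mathbf{V}_1$ (invertible) with eigenvalues $\mathbf{D}_1$. Call the remaining top halves $\mathbf{V}_2$ with eigenvalues $\mathbf{D}_2$, and set $\mathbf{Q}\equiv\mathbf{V}_1^{-1}\mathbf{V}_2$. The permuted matrix then takes exactly the form \eqref{Eq_P_1_a}. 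It is invertible, so the determinant formula above forces $\det(\mathbf{Q}\mathbf{D}_2-\mathbf{D}_1\mathbf{Q})\neq0$.

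The main obstacle is the rank step: justifying that $N$ of the eigenvector top-halves can be chosen independent. I will argue it directly. If $\mathrm{rank}[\mathbf{W}\,|\,\mathbf{W}']<N$, there is a nonzero row vector $y$ with $y[\mathbf{W}\,|\,\mathbf{W}']=0$. Then the nonzero row vector $[y,0]$ annihilates every column of $\mathbf{P}'$, contradicting invertibility. A minor point to record is that the permutation does not affect the diagonal form of $\lambda\mathbf{1}-\mathbf{D}$.
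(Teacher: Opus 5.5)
Your proposal is correct and follows essentially the same route as the paper: eigenvector columns of the form $[v,\,vd]^T$ from Remark~\ref{remark_0}, the $[y,\,0]$ argument that the top block row has rank $N$, a column rearrangement giving $\mathbf{V}_1$ invertible and $\mathbf{Q}=\mathbf{V}_1^{-1}\mathbf{V}_2$, and the Schur-complement identity $\det\mathbf{P}=(\det\mathbf{V}_1)^2\det(\mathbf{Q}\mathbf{D}_2-\mathbf{D}_1\mathbf{Q})$. You also spell out details the paper leaves implicit, namely the simultaneous permutation of the columns of $\mathbf{P}'$ and the diagonal of $\mathbf{D}$, and the determinant check in the $(\Leftarrow)$ direction, which the paper calls trivial.
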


The definitions of the matrices~$\mathbf{V}_1$ and~$\mathbf{Q}$ are
provided in the following brief proof, and reinterpreted in
theorem~\ref{theor_2}.

\begin{proof}
  $\Rightarrow)$ Following Remark~\ref{remark_0}, if
  $\mathbf{M}( \lambda) $ is diagonalizable then
\begin{align}
\mathbf{P}=\left[
\begin{array}
[c]{c}%
\mathbf{V}\\
\mathbf{V}\mathbf{D}
\end{array}
\right]
\end{align}
where~$\mathbf{V}$ is an~$N\times2N$ matrix,
$\mathbf{D}=\textrm{diag}( \mathbf{D}_{1}, \mathbf{D}_{2})$
with~$\mathbf{D}_{1}$ and~$\mathbf{D}_{2}$ diagonal. The values of
these diagonals are the eigenvalues of~$\mathbf{M}(\lambda)$.

We observe now that for~$\mathbf{P}$ to be invertible, the
matrix~$\mathbf{V}$ must have maximal rank. If this condition is not
satisfied, there exists a vector~$X\neq0$ such
that~$X\mathbf{V}=0$. In that case, we have:

\begin{align}
\left[
\begin{array}
	[c]{cc}%
	\mathbf{X} & 0
\end{array}
\right]  \mathbf{P}=\left[
\begin{array}
	[c]{cc}%
	\mathbf{X} & 0
\end{array}
\right]  \left[
\begin{array}
	[c]{c}%
	\mathbf{V}\\
	\mathbf{V}\mathbf{D}
\end{array}
\right]  =0,
\end{align}
which implies that~$\mathbf{P}$ is not invertible.

Assuming that~$\mathbf{V}$ has maximal rank, we arrange its columns so
that the first~$\mathbf{N}$ columns are linearly independent. This
allows us to express~$\mathbf{V}$ in the form:
\begin{align}
\mathbf{V}=\left[
\begin{array}
[c]{cc}%
\mathbf{V}_{1} & \mathbf{V}_{1}Q
\end{array}
\right],
\end{align}
where~$\mathbf{V}_{1}$ and~$\mathbf{Q}$ are~$N\times N$ matrices and
here~$\det \mathbf{V}_{1}\neq0$. The term~$\mathbf{V}_{1}\mathbf{Q}$
means that the last columns of~$\mathbf{V}$ can be written as a linear
combination of the columns of~$\mathbf{V}_{1}$. Using this structure,
we conclude
\begin{align}
\mathbf{P}  &  \equiv\left[
\begin{array}
[c]{cc}%
\mathbf{V}_{1} & \mathbf{V}_{1}\mathbf{Q}\\
\mathbf{V}_{1}\mathbf{D}_{1} & \mathbf{V}_{1}\mathbf{Q}\mathbf{D}_{2}%
\end{array}
\right]\,.
\end{align}
Finally, by applying remark~\ref{det_A_B_C_D_1}, we find
\begin{align}
  \det \mathbf{P}=\det(\mathbf{V}_{1})^{2}\det(\mathbf{Q}\mathbf{D}_{2}
  -\mathbf{D}_{1}\mathbf{Q}),
\end{align}
implying that the conditions~$\det\mathbf{V}_{1}\neq0$
and~$\det(\mathbf{Q}\mathbf{D}_{2}-\mathbf{D}_{1}\mathbf{Q})\neq0$
must hold to guarantee that~$\mathbf{P}$ is invertible.

$\Leftarrow)$ Holds trivially.
\end{proof}

The following theorem summarizes the foregoing discussion, and
explains how to diagonalize~$\mathbf{M}(\lambda)$ by calculating only
the kernel of~$\mathbf{S}(\lambda)$.

\begin{theorem}
\label{theor_2}
$\mathbf{M}(\lambda)$ is diagonalizable with eigenpairs
\begin{align}%
  \begin{array}
    [c]{ccccc}%
    \left(  \lambda_{i},\operatorname{col}_{i}\left[
    \begin{array}
      [c]{c}%
      \mathbf{V}_{1}\\
      \mathbf{V}_{1}\mathbf{D}_{1}%
    \end{array}
    \right]  \right)
    & \textrm{and}
    & \left(  \rho_{i},\operatorname{col}_{i}\left[
      \begin{array}
        [c]{c}%
        \mathbf{V}_{1}\mathbf{Q}\\
        \mathbf{V}_{1}\mathbf{Q}\mathbf{D}_{2}%
      \end{array}
    \right]  \right)
    &
    & \text{with }i=1,...,N\text{.}
  \end{array}
      \label{M_eigen_pairs}%
\end{align}
where~$\operatorname{col}_{i}[R]$ denotes the~$i$-th column
of~$R$, $\mathbf{V}_{1}$ is invertible and
\begin{align}
  \mathbf{D}_{1}
  &  \equiv\emph{diag}(\lambda_{1},\dots,\lambda_{N}), \label{def_D1_1}
  \\
  \mathbf{D}_{2}
  &  \equiv\emph{diag}(  \rho_{1},\dots,\rho_{N}), \label{def_D2_1}
\end{align}
which implies
\begin{align}
  \det\left(\mathbf{M}(\lambda)\right) = ( \lambda-\rho_{1}) \dots (\lambda-\rho_{N})
  (\lambda-\lambda_{1}) \dots(\lambda-\lambda_{N}), \label{det_M_1}
\end{align}
 if and only if the second order pencil~$\mathbf{S}(\lambda)$
(Eqn.~\eqref{def_S_l_1}) satisfies
\begin{align}
  \det\left( \mathbf{S}(\lambda) \right)  =(\lambda-\rho_{1}) \dots( \lambda-\rho_{N})
  ( \lambda-\lambda_{1}) \dots( \lambda-\lambda_{N}), \label{eq_det_S_1}
\end{align}
with eigenpairs
\begin{align}
\begin{array}
[c]{ccccc}
\left( \lambda_{i},\operatorname{col}_{i}( \mathbf{V}_{1}) \right), &
\text{and} & \left(  \rho_{i},\operatorname{col}_{i}( \mathbf{V}_{1}\mathbf{Q})
\right),  &  & \text{with }i=1,...,N\text{,}%
\end{array}
\label{cod_vec_1}
\end{align}
and finally
\begin{align}
  \det \mathbf{V}_{1}
  & \neq0\label{cond_V1_det_1}\\
  \det(  \mathbf{D}_{1}\mathbf{Q}-\mathbf{Q}\mathbf{D}_{2})
  & \neq0,\label{cond_DQ_det_1}
\end{align}
\end{theorem}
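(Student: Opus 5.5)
The proof is a two-way translation built from results already established. Remark~\ref{remark_1} converts eigenpairs of $\mathbf{M}(\lambda)$ into eigenpairs of $\mathbf{S}(\lambda)$ and back. Equation~\eqref{detM_detS_1} identifies the determinants. Lemma~\ref{Lemma_1} supplies the block structure of $\mathbf{P}$ together with its invertibility criterion, $\det\mathbf{P}=\det(\mathbf{V}_1)^2\det(\mathbf{Q}\mathbf{D}_2-\mathbf{D}_1\mathbf{Q})$.

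\emph{Forward direction.} Assume $\mathbf{M}(\lambda)$ is diagonalizable with the eigenpairs~\eqref{M_eigen_pairs}.
\begin{itemize}
\item By Lemma~\ref{Lemma_1}, $\mathbf{M}(\lambda)=\mathbf{P}(\lambda\mathbf{1}-\mathrm{diag}(\mathbf{D}_1,\mathbf{D}_2))\mathbf{P}^{-1}$ with $\mathbf{P}$ of the form~\eqref{Eq_P_1_a}.
\item Taking determinants gives~\eqref{det_M_1}, and then~\eqref{detM_detS_1} gives~\eqref{eq_det_S_1}.
\item Each column of $\mathbf{P}$ has the form $[v,\,v\sigma]^T$ with $\mathbf{M}(\sigma)[v,\,v\sigma]^T=0$. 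By Remark~\ref{remark_1} this means $\mathbf{S}(\sigma)v=0$. Applied to the two column blocks, this yields exactly the eigenpairs~\eqref{cod_vec_1}.
\item Invertibility of $\mathbf{P}$, through the determinant formula in the proof of Lemma~\ref{Lemma_1}, forces~\eqref{cond_V1_det_1} and~\eqref{cond_DQ_det_1}. The sign change between $\mathbf{Q}\mathbf{D}_2-\mathbf{D}_1\mathbf{Q}$ and $\mathbf{D}_1\mathbf{Q}-\mathbf{Q}\mathbf{D}_2$ does not affect whether the determinant vanishes.
\end{itemize}

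\emph{Converse.} Suppose $\mathbf{S}(\lambda)$ has determinant~\eqref{eq_det_S_1}, eigenpairs~\eqref{cod_vec_1}, and satisfies~\eqref{cond_V1_det_1}--\eqref{cond_DQ_det_1}.
\begin{itemize}
\item Define $\mathbf{P}$ by~\eqref{Eq_P_1_a}. By Remark~\ref{remark_1}, each of its columns $[v,\,v\sigma]^T$ lies in $\ker\mathbf{M}(\sigma)$ for the corresponding $\sigma\in\{\lambda_i,\rho_i\}$.
\item Column by column this reads $\mathbf{M}(0)\mathbf{P}=-\mathbf{P}\,\mathrm{diag}(\mathbf{D}_1,\mathbf{D}_2)$, since $\mathbf{M}(\sigma)=\sigma\mathbf{1}+\mathbf{M}(0)$.
\item The Schur-complement computation (Remark~\ref{det_A_B_C_D_1} with $\mathbf{V}_1$ invertible) gives $\det\mathbf{P}=\det(\mathbf{V}_1)^2\det(\mathbf{Q}\mathbf{D}_2-\mathbf{D}_1\mathbf{Q})\neq 0$.
\item Hence $\mathbf{M}(\lambda)=\mathbf{P}(\lambda\mathbf{1}-\mathrm{diag}(\mathbf{D}_1,\mathbf{D}_2))\mathbf{P}^{-1}$, so $\mathbf{M}(\lambda)$ is diagonalizable with the stated eigenpairs. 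Equation~\eqref{det_M_1} then follows either from this factorization or from~\eqref{detM_detS_1}.
\end{itemize}

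\emph{Main obstacle.} The only substantive step is the determinant identity for $\mathbf{P}$. The Schur complement is
\[
\mathbf{V}_1\mathbf{Q}\mathbf{D}_2-\mathbf{V}_1\mathbf{D}_1\mathbf{V}_1^{-1}\mathbf{V}_1\mathbf{Q}=\mathbf{V}_1(\mathbf{Q}\mathbf{D}_2-\mathbf{D}_1\mathbf{Q}),
\]
which gives the stated factorization at once. The point needing care is that no multiplicity hypothesis beyond~\eqref{eq_det_S_1} is required. Invertibility of $\mathbf{P}$ already supplies $2N$ independent eigenvectors of $\mathbf{M}(0)$, so diagonalizability is automatic and Theorem~\ref{theo_1} serves only as a consistency check. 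In particular, repeated eigenvalues or linearly dependent $v$'s for $\mathbf{S}$ (as in Remark~\ref{remark_2}) cause no trouble, because independence is imposed at the level of $\mathbf{P}$ and not of the columns of $\mathbf{V}_1$ and $\mathbf{V}_1\mathbf{Q}$ jointly.
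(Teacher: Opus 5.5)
Your proof is correct and follows the same route as the paper's: both directions combine Lemma~\ref{Lemma_1} (the block form of $\mathbf{P}$ and $\det\mathbf{P}=\det(\mathbf{V}_1)^2\det(\mathbf{Q}\mathbf{D}_2-\mathbf{D}_1\mathbf{Q})$), Remark~\ref{remark_1}, and the identity $\det\mathbf{M}(\lambda)=\det\mathbf{S}(\lambda)$. You make explicit the steps the paper leaves implicit, namely $\mathbf{M}(0)\mathbf{P}=-\mathbf{P}\,\mathrm{diag}(\mathbf{D}_1,\mathbf{D}_2)$ and the Schur-complement computation, and you correctly observe that hypothesis~\eqref{eq_det_S_1} is actually redundant in the converse.
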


\begin{proof}
  $\Rightarrow)$ Assuming~\eqref{M_eigen_pairs} and using
  Lemma~\ref{Lemma_1} and remark~\ref{remark_1}, we conclude
  \eqref{cod_vec_1}, \eqref{cond_V1_det_1}, \eqref{cond_DQ_det_1}.
  
  $\Leftarrow)$ Assuming Eqn.~\eqref{eq_det_S_1} and using
  Eqn.~\eqref{detM_detS_1}, we conclude Eqn.~\eqref{det_M_1}. This
  means that~$\left\{ \lambda_{i},\rho _{i}\right\}$ are the
  eigenvalues of~$\mathbf{M}(\lambda)$. Assuming
  additionally~\eqref{cod_vec_1}, \eqref{cond_V1_det_1},
  \eqref{cond_DQ_det_1} and using Lemma~\ref{Lemma_1} and
  remark~\ref{remark_1}, we conclude~\eqref{M_eigen_pairs}.
  
\end{proof}

%%%%%%%%%%%%%%%%%%%%%%%%%%%%%%%%%%%%%%%%%%%%%%%%%%%%%%%%%%%%%
\subsection{$\mathbf{S}(\lambda)$ as a product of two first order
  pencils\label{S_M_M_1}}
%%%%%%%%%%%%%%%%%%%%%%%%%%%%%%%%%%%%%%%%%%%%%%%%%%%%%%%%%%%%%

In this subsection we show that when~$\mathbf{M}(\lambda)$ is
diagonalizable the second order pencil~$\mathbf{S}(\lambda)$ is a
product of two first order pencils that are diagonalizable.

\begin{theorem}
  \label{theor_M_S}
  $\mathbf{M}( \lambda)$ is diagonalizable if and only if
  \begin{align}
    \mathbf{S}(\lambda) = \left(  \lambda
    \mathbf{1}-\mathbf{A}_{2}\right) \left(
    \lambda \mathbf{1}-\mathbf{A}_{1}\right) \label{S_Seg_O_1}
  \end{align}
  where
  \begin{align}
    \mathbf{A}_{1} & = \mathbf{V}_{1} \mathbf{D}_{1} \mathbf{V}_{1}^{-1} \label{A_1}\\
    \mathbf{A}_{2} & = \mathbf{V}_{1}(  \mathbf{D}_{1}\mathbf{Q} - \mathbf{Q}\mathbf{D}_{2})
                     \mathbf{D}_{2}(\mathbf{V}_{1}
                     ( \mathbf{D}_{1}\mathbf{Q}-\mathbf{Q}\mathbf{D}_{2}) )^{-1} \label{A_2}
\end{align}
and
\begin{align}
  \det \mathbf{V}_{1}
  & \neq0, \label{eq_det_V1_1}\\
  \det( \mathbf{D}_{1}\mathbf{Q}-\mathbf{Q}\mathbf{D}_{2})
  &\neq0\,. \label{eq_det_D1Q_QD2_1}
\end{align}
This means that
\begin{align}
  \mathbf{A}^{-1}\mathbf{C} &= \mathbf{A}_2 \mathbf{A}_1, \label{eq_AC_1__n}\\
  \mathbf{A}^{-1}\mathbf{B} &=-\left(\mathbf{A}_1+\mathbf{A}_2 \right)\,. \label{eq_AB_1__n}
\end{align}

\end{theorem}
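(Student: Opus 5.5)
The plan is to reduce everything to the identity $\mathbf{S}(\lambda)=\lambda^2\mathbf{1}+\lambda\mathbf{A}^{-1}\mathbf{B}+\mathbf{A}^{-1}\mathbf{C}$. Two monic quadratic pencils coincide exactly when their coefficients agree, so \eqref{S_Seg_O_1} is equivalent to the pair \eqref{eq_AC_1__n}, \eqref{eq_AB_1__n}, since $(\lambda\mathbf{1}-\mathbf{A}_2)(\lambda\mathbf{1}-\mathbf{A}_1)=\lambda^2\mathbf{1}-\lambda(\mathbf{A}_1+\mathbf{A}_2)+\mathbf{A}_2\mathbf{A}_1$. The equivalence with diagonalizability of $\mathbf{M}(\lambda)$ will then come from Theorem~\ref{theor_2}, which translates diagonalizability into the existence of $\mathbf{V}_1,\mathbf{Q},\mathbf{D}_1,\mathbf{D}_2$ with \eqref{cod_vec_1}--\eqref{cond_DQ_det_1}.

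\emph{Direction ($\Rightarrow$).} By Theorem~\ref{theor_2}, $\mathbf{S}(\lambda_i)\operatorname{col}_i(\mathbf{V}_1)=0$ and $\mathbf{S}(\rho_i)\operatorname{col}_i(\mathbf{V}_1\mathbf{Q})=0$. In matrix form these read
\begin{align}
\mathbf{V}_1\mathbf{D}_1^2+\mathbf{A}^{-1}\mathbf{B}\,\mathbf{V}_1\mathbf{D}_1+\mathbf{A}^{-1}\mathbf{C}\,\mathbf{V}_1=0,\qquad
\mathbf{V}_1\mathbf{Q}\mathbf{D}_2^2+\mathbf{A}^{-1}\mathbf{B}\,\mathbf{V}_1\mathbf{Q}\mathbf{D}_2+\mathbf{A}^{-1}\mathbf{C}\,\mathbf{V}_1\mathbf{Q}=0.
\end{align}
Define $\mathbf{A}_1$ by \eqref{A_1}, so that $\mathbf{S}(\lambda)$ has right factor $(\lambda\mathbf{1}-\mathbf{A}_1)$ whose eigenpairs are $(\lambda_i,\operatorname{col}_i\mathbf{V}_1)$. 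Using the first identity with $\mathbf{V}_1$ invertible, compute $\mathbf{A}^{-1}\mathbf{C}=-\mathbf{A}^{-1}\mathbf{B}\mathbf{A}_1-\mathbf{A}_1^2$. Setting $\mathbf{A}_2\equiv-\mathbf{A}^{-1}\mathbf{B}-\mathbf{A}_1$ gives $\mathbf{A}_2\mathbf{A}_1=\mathbf{A}^{-1}\mathbf{C}$, which is exactly the factorization \eqref{S_Seg_O_1}.

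It then remains to identify $\mathbf{A}_2$ with \eqref{A_2}. Apply the factorization to $\mathbf{V}_1\mathbf{Q}$ at $\rho_i$. Since $(\rho_i\mathbf{1}-\mathbf{A}_1)\mathbf{V}_1\mathbf{Q}e_i=\mathbf{V}_1(\rho_i\mathbf{Q}-\mathbf{D}_1\mathbf{Q})e_i$, the vector $w_i\equiv\mathbf{V}_1(\mathbf{D}_1\mathbf{Q}-\mathbf{Q}\mathbf{D}_2)e_i$ satisfies $(\rho_i\mathbf{1}-\mathbf{A}_2)w_i=0$. Equivalently,
\begin{align}
\mathbf{A}_2\mathbf{W}=\mathbf{W}\mathbf{D}_2,\qquad \mathbf{W}\equiv\mathbf{V}_1(\mathbf{D}_1\mathbf{Q}-\mathbf{Q}\mathbf{D}_2).
\end{align}
By \eqref{cond_DQ_det_1}, $\mathbf{W}$ is invertible, which yields \eqref{A_2}. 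The determinant conditions \eqref{eq_det_V1_1}, \eqref{eq_det_D1Q_QD2_1} are inherited directly from Theorem~\ref{theor_2}.

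\emph{Direction ($\Leftarrow$).} Given \eqref{A_1}, \eqref{A_2} and the two determinant conditions, the factorization gives $\det\mathbf{S}=\prod(\lambda-\rho_i)\prod(\lambda-\lambda_i)$. Moreover, $\mathbf{S}(\lambda_i)\mathbf{V}_1e_i=(\lambda_i-\mathbf{A}_2)(\lambda_i-\mathbf{A}_1)\mathbf{V}_1e_i=0$. For the second family, $\mathbf{S}(\rho_i)\mathbf{V}_1\mathbf{Q}e_i=(\rho_i-\mathbf{A}_2)w_i=0$, reversing the computation above: $\mathbf{A}_2w_i=\rho_iw_i$ holds by construction of \eqref{A_2}. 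So all hypotheses of Theorem~\ref{theor_2} hold, and $\mathbf{M}(\lambda)$ is diagonalizable.

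The main obstacle is the bookkeeping in the $\rho_i$ step: checking that $(\rho_i\mathbf{1}-\mathbf{A}_1)\mathbf{V}_1\mathbf{Q}e_i$ equals $\mathbf{V}_1(\mathbf{D}_1\mathbf{Q}-\mathbf{Q}\mathbf{D}_2)e_i$ up to sign, column by column. This needs care because $\mathbf{D}_2$ acts on the right while $\rho_i$ is a scalar, and the sign convention must match \eqref{A_2}; the overall sign is harmless because it cancels in $\mathbf{W}\mathbf{D}_2\mathbf{W}^{-1}$.
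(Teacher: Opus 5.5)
Your proposal is correct and follows essentially the paper's route. The paper also obtains the matrix eigen-equations for $\mathbf{V}_1$ and $\mathbf{V}_1\mathbf{Q}$, via the identity $[\lambda\mathbf{1}+\mathbf{A}^{-1}\mathbf{B},\ \mathbf{1}]\mathbf{M}(\lambda)=[\mathbf{S}(\lambda),\ 0]$ applied to $\mathbf{P}$, and factors $\mathbf{S}(\lambda)=(\lambda\mathbf{1}+\mathbf{A}_1+\mathbf{A}^{-1}\mathbf{B})(\lambda\mathbf{1}-\mathbf{A}_1)$; for the converse it checks $\mathbf{M}(\lambda)\mathbf{P}=\mathbf{P}\,\textrm{diag}(\lambda\mathbf{1}-\mathbf{D}_1,\lambda\mathbf{1}-\mathbf{D}_2)$, which is your eigenpair check passed through Remark~\ref{remark_1}. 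The only difference is in identifying $\mathbf{A}_2$: the paper solves explicitly for $\mathbf{A}^{-1}\mathbf{B}$ by multiplying the first eigen-equation by $\mathbf{Q}$ and subtracting, whereas your observation that $(\rho_i\mathbf{1}-\mathbf{A}_1)$ sends $\operatorname{col}_i(\mathbf{V}_1\mathbf{Q})$ to $-\operatorname{col}_i(\mathbf{W})$ gives $\mathbf{A}_2\mathbf{W}=\mathbf{W}\mathbf{D}_2$ more directly.
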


\begin{proof}
  $\Rightarrow)$ We first observe that for any value of~$\lambda$, the
  following holds
  \begin{align}
\left[
\begin{array}
[c]{cc}%
\lambda \mathbf{1}+\mathbf{A}^{-1}\mathbf{B} & \mathbf{1}
\end{array}
\right]  \mathbf{M}(\lambda)  =\left[
\begin{array}
[c]{cc}%
\mathbf{S}(\lambda)  & 0
\end{array}
\right]. \label{eq_Proy_M_1}%
\end{align}

Assuming that~$\mathbf{M}(\lambda)$ is given by Eqn.~\eqref{Eq_M_P_1},
multiplying Eqn.~\eqref{eq_Proy_M_1} by~$\mathbf{P}$ and
setting~$\lambda = 0$, we have
\begin{align}
\left[
\begin{array}
[c]{cc}%
\mathbf{A}^{-1}\mathbf{B} & \mathbf{1}
\end{array}
\right]  \mathbf{M}(0)\,\mathbf{P}=\left[
\begin{array}
[c]{cc}
\mathbf{S}(0) & 0
\end{array}
\right]  \mathbf{P}\,.
\end{align}
The right-hand side can be rewritten as
\begin{align}
  \left[
  \begin{array}
    [c]{cc}%
	\left( \mathbf{A}^{-1}\mathbf{C}\right)  \mathbf{V}_{1} & \left(
	\mathbf{A}^{-1}\mathbf{C}\right)  \mathbf{V}_{1}\mathbf{Q}
\end{array}
\right]
\end{align}
Using Eqs.~\eqref{Eq_M_P_1} and~\eqref{Eq_P_1_a}, the left-hand side
becomes
\begin{align}
\left[
\begin{array}
	[c]{cc}%
	-\left( \mathbf{A}^{-1}\mathbf{B}  \mathbf{V}_{1}+\mathbf{V}_{1}\mathbf{D}_{1}\right)  
	\mathbf{D}_{1}  & -\left(  \mathbf{A}^{-1}\mathbf{B} \mathbf{V}_{1}
	\mathbf{Q}+\mathbf{V}_{1}\mathbf{Q}\mathbf{D}_{2}\right)\mathbf{D}_{2}
\end{array}
\right]
\end{align}
Therefore
\begin{align}
  \left(  \mathbf{A}^{-1}\mathbf{C}\right)  \mathbf{V}_{1}
  & = -\left(  \left(  \mathbf{A}^{-1}\mathbf{B}\right)  \mathbf{V}_{1}
    +\mathbf{V}_{1}\mathbf{D}_{1}\right) \mathbf{D}_{1}, \label{Eq_AV_AVQ_1} \\
  \left(  \mathbf{A}^{-1}\mathbf{C}\right)  \mathbf{V}_{1}\mathbf{Q}
  & = -\left(  \left(  \mathbf{A}^{-1}\mathbf{B}\right)  \mathbf{V}_{1}\mathbf{Q}
    +\mathbf{V}_{1}\mathbf{Q}\mathbf{D}_{2}\right)  \mathbf{D}_{2}, \label{Eq_AV_AVQ_2}
\end{align}
from which we conclude
\begin{align}
  \left( \mathbf{A}^{-1}\mathbf{C}\right) =
  -\left( \left( \mathbf{A}^{-1}\mathbf{B}\right) \mathbf{V}_{1}+\mathbf{V}_{1}
  \mathbf{D}_{1}\right) \mathbf{D}_{1} \mathbf{V}_{1}^{-1}\,. \label{eq_AC_1}
\end{align}
Multiplying Eqn.~\eqref{Eq_AV_AVQ_1} by~$\mathbf{Q}$, equating it with
Eqn.~\eqref{Eq_AV_AVQ_2}, and factorizing~$\mathbf{A}^{-1}\mathbf{B}$,
we obtain
\begin{align}
  \left(  \mathbf{A}^{-1}\mathbf{B}\right)
  =-\mathbf{V}_{1}\left(\mathbf{D}_{1}^{2}\mathbf{Q}-\mathbf{Q}\mathbf{D}_{2}^{2}\right)
  \left(\mathbf{D}_{1}\mathbf{Q}-\mathbf{Q}\mathbf{D}_{2}\right)^{-1}
  \mathbf{V}_{1}^{-1}\,.
\end{align}
Since 
\begin{align}
  \left( \mathbf{D}_{1}^{2}\mathbf{Q}-\mathbf{Q}\mathbf{D}_{2}^{2}\right)
  &=\left(  \mathbf{D}_{1}^{2}\mathbf{Q}-\mathbf{D}_{1}%
    \mathbf{Q}\mathbf{D}_{2}+\mathbf{D}_{1}\mathbf{Q}\mathbf{D}_{2}
    - \mathbf{Q}\mathbf{D}_{2}^{2}\right) \nonumber\\
  &=\left(  \mathbf{D}_{1}\left( \mathbf{D}_{1}\mathbf{Q} -
    \mathbf{Q}\mathbf{D}_{2}\right)
    +\left( \mathbf{D}_{1}\mathbf{Q} - \mathbf{Q}\mathbf{D}_{2}\right)
    \mathbf{D}_{2}\right),
\end{align}
we may conclude that
\begin{align}
  \mathbf{A}^{-1}\mathbf{B} =-\mathbf{V}_{1}\left(  \mathbf{D}_{1}
  +\left(  \mathbf{D}_{1}\mathbf{Q}-\mathbf{Q}\mathbf{D}_{2}\right)
  \mathbf{D}_{2}\left(  \mathbf{D}_{1}\mathbf{Q}
  -\mathbf{Q}\mathbf{D}_{2}\right)  ^{-1}\right)
  \mathbf{V}_{1}^{-1}\,. \label{eq_AB_1}
\end{align}

Substituting this result and Eqn.~\eqref{eq_AC_1}
into~$\mathbf{S}(\lambda)$, we obtain
\begin{align}
  \mathbf{S}( \lambda ) = \left( \lambda^{2}\mathbf{1}-\mathbf{V}_{1}
  \mathbf{D}_{1}^{2}\mathbf{V}_{1}^{-1} +\mathbf{A}^{-1}\mathbf{B}
  \left( \lambda \mathbf{1}
  - \mathbf{V}_{1} \mathbf{D}_{1}\mathbf{V}_{1}^{-1}\right)
  \right)\,.
\end{align}
It is easy to verify that
\begin{align}
  \lambda^{2}\mathbf{1}-\mathbf{V}_{1}\mathbf{D}_{1}^{2}\mathbf{V}_{1}^{-1}
  =\left( \lambda \mathbf{1}+\mathbf{V}_{1}\mathbf{D}_{1}\mathbf{V}_{1}^{-1}\right)
  \left(  \lambda \mathbf{1}-\mathbf{V}_{1}\mathbf{D}_{1}\mathbf{V}_{1}^{-1}\right)  ,
\end{align}
and hence
\begin{align}
  \mathbf{S}( \lambda ) = \left(  
  \lambda \mathbf{1} + \mathbf{V}_{1}\mathbf{D}_{1}\mathbf{V}_{1}^{-1}
  + \mathbf{A}^{-1}\mathbf{B}\right)
  \left(  \lambda \mathbf{1} - \mathbf{V}_{1}\mathbf{D}_{1}\mathbf{V}_{1}^{-1}
  \right)\,.
\end{align}
Finally, substituting the expression for~$\mathbf{A}^{-1}\mathbf{B}$,
we find Eqn.~\eqref{S_Seg_O_1}.

$\Leftarrow)$ Replacing~$\mathbf{A}_{1,2}$ in Eqns.~\eqref{eq_AC_1__n}
and~\eqref{eq_AB_1__n}, we obtain
\begin{align}
  \mathbf{A}^{-1}\mathbf{B}& =-\mathbf{V}_{1}\left(  \mathbf{D}_{1}
    +\left(  \mathbf{D}_{1}\mathbf{Q}-\mathbf{Q}\mathbf{D}_{2}\right)
    \mathbf{D}_{2}\left(  \mathbf{D}_{1}\mathbf{Q}
    -\mathbf{Q}\mathbf{D}_{2}\right)  ^{-1}\right)  \mathbf{V}_{1}^{-1}\\
  \mathbf{A}^{-1}\mathbf{C}& =\mathbf{V}_{1}\left(  \left(  \mathbf{D}_{1}\mathbf{Q}
    -\mathbf{Q}\mathbf{D}_{2}\right)  \mathbf{D}_{2}\left(
    \mathbf{D}_{1}\mathbf{Q}-\mathbf{Q}\mathbf{D}_{2}\right)^{-1}
    \mathbf{D}_{1}\right)  \mathbf{V}_{1}^{-1}
\end{align}
Using these expressions on $\mathbf{M}( \lambda) $ and multiplying on
the right by the matrix
\begin{align}
  \mathbf{P}=\left[
  \begin{array}
    [c]{cc}
    \mathbf{V}_{1} & \mathbf{V}_{1}\mathbf{Q}\\
    \mathbf{V}_{1}\mathbf{D}_{1} & \mathbf{V}_{1}\mathbf{Q}\mathbf{D}_{2}
  \end{array}\right],
\end{align}
we obtain
\begin{align}
\mathbf{M}(\lambda)\mathbf{P}=\mathbf{P}\left[
\begin{array}
  [c]{cc}%
  \lambda \mathbf{1}-\mathbf{D}_{1} & 0\\
  0 & \lambda \mathbf{1}-\mathbf{D}_{2}
\end{array}
\right]\,.
\end{align}
The algebraic and straightforward calculations are left to the
reader. Finally, multiplying by~$\mathbf{P}^{-1}$, we obtain the
diagonalization of~$\mathbf{M}(\lambda)$ and conclude the proof.
\end{proof}

Following the previous theorem, we say that~$\mathbf{S}(\lambda)$ is
diagonalizable with real eigenvalues when~$\mathbf{D}_{1,2}$ are real
and Eqns.~\eqref{S_Seg_O_1}, \eqref{A_1}, \eqref{A_2},
\eqref{eq_det_V1_1} and~\eqref{eq_det_D1Q_QD2_1} hold. Observe that,
substituting the expressions for~$(\mathbf{A}_{1,2})$ from
Eqns.~\eqref{A_1} and~\eqref{A_2}, the second-order pencil can be
written as
\begin{align}
  \mathbf{S}(\lambda)=\mathbf{V}_{2}\,(\lambda
  \mathbf{1}-\mathbf{D}_{1})\,\mathbf{V}_{2}^{-1}\;
  \mathbf{V}_{1}\,(\lambda
  \mathbf{1}-\mathbf{D}_{2})\,\mathbf{V}_{1}^{-1},
\end{align}
where~$\mathbf{V}_{2}\equiv
\mathbf{V}_{1}(\mathbf{D}_{1}\mathbf{Q}-\mathbf{Q}\mathbf{D}_{2})$.
This representation makes explicit that~$\mathbf{S}(\lambda)$
factorizes as the product of two diagonalizable first-order matrix
pencils. We thus obtain a constructive factorization of the
second-order pencil in terms of its eigenstructure.

Assuming~$\mathbf{S}(\lambda) $ is diagonalizable, it is interesting
to verify its eigenpairs explicitly. This verification is provided in
the following Lemma.

\begin{lemma}
  Assuming $\mathbf{S}(\lambda) $ diagonalizable and
  Eqns.~\eqref{def_D1_1}, \eqref{def_D2_1} hold, the eigenpairs of
  $\mathbf{S}(\lambda) $ are
\begin{align}%
\begin{array}
[c]{ccccc}%
\left(  \lambda_{i},\operatorname{col}_{i}\left(  \mathbf{V}_{1}\right)  \right)  &
\text{and} & \left(  \rho_{i},\operatorname{col}_{i}\left(  \mathbf{V}_{1}\mathbf{Q}\right)
\right)  &  & \text{with }i=1,...,N\text{.}%
\end{array}
\label{eigen_pairs_S_1}
\end{align}

\end{lemma}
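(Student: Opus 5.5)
We verify the eigenpairs directly from the factorized form $\mathbf{S}(\lambda)=(\lambda\mathbf{1}-\mathbf{A}_2)(\lambda\mathbf{1}-\mathbf{A}_1)$ of Theorem~\ref{theor_M_S}, with $\mathbf{A}_1=\mathbf{V}_1\mathbf{D}_1\mathbf{V}_1^{-1}$ and $\mathbf{A}_2=\mathbf{V}_2\mathbf{D}_2\mathbf{V}_2^{-1}$, where $\mathbf{V}_2\equiv\mathbf{V}_1(\mathbf{D}_1\mathbf{Q}-\mathbf{Q}\mathbf{D}_2)$ is invertible by \eqref{eq_det_V1_1} and \eqref{eq_det_D1Q_QD2_1}. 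We check the two families of columns separately, working at the level of whole matrices and then reading off columns, since $\mathbf{D}_1,\mathbf{D}_2$ are diagonal.

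\textbf{First family.} Since $\mathbf{A}_1\mathbf{V}_1=\mathbf{V}_1\mathbf{D}_1$, the $i$-th column gives $(\lambda_i\mathbf{1}-\mathbf{A}_1)\operatorname{col}_i(\mathbf{V}_1)=0$, and hence $\mathbf{S}(\lambda_i)\operatorname{col}_i(\mathbf{V}_1)=0$. The column is nonzero because $\mathbf{V}_1$ is invertible, so $(\lambda_i,\operatorname{col}_i(\mathbf{V}_1))$ is an eigenpair.

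\textbf{Second family.} For $\operatorname{col}_i(\mathbf{V}_1\mathbf{Q})$ the first factor does not annihilate the vector, so we compute
\begin{align}
(\lambda\mathbf{1}-\mathbf{A}_1)\mathbf{V}_1\mathbf{Q}=\mathbf{V}_1(\lambda\mathbf{Q}-\mathbf{D}_1\mathbf{Q}).
\end{align}
Taking the $i$-th column at $\lambda=\rho_i$ and using that $\mathbf{Q}\mathbf{D}_2$ has $i$-th column $\rho_i\operatorname{col}_i(\mathbf{Q})$, we get
\begin{align}
(\rho_i\mathbf{1}-\mathbf{A}_1)\operatorname{col}_i(\mathbf{V}_1\mathbf{Q})=\operatorname{col}_i\big(\mathbf{V}_1(\mathbf{Q}\mathbf{D}_2-\mathbf{D}_1\mathbf{Q})\big)=-\operatorname{col}_i(\mathbf{V}_2).
\end{align}
Since $\mathbf{A}_2\mathbf{V}_2=\mathbf{V}_2\mathbf{D}_2$, we have $(\rho_i\mathbf{1}-\mathbf{A}_2)\operatorname{col}_i(\mathbf{V}_2)=0$, so $\mathbf{S}(\rho_i)\operatorname{col}_i(\mathbf{V}_1\mathbf{Q})=0$. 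This step is the main point of the proof: one has to recognise that $\mathbf{V}_2$ is exactly the image of the $\mathbf{Q}$-columns under the first factor, so that the second factor kills it.

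\textbf{Nonvanishing of the $\mathbf{Q}$-columns.} It remains to show $\operatorname{col}_i(\mathbf{V}_1\mathbf{Q})\neq0$. If $\operatorname{col}_i(\mathbf{Q})=0$, then the $i$-th column of $\mathbf{D}_1\mathbf{Q}-\mathbf{Q}\mathbf{D}_2$ vanishes, contradicting \eqref{eq_det_D1Q_QD2_1}. Hence $\operatorname{col}_i(\mathbf{Q})\neq0$, and so $\operatorname{col}_i(\mathbf{V}_1\mathbf{Q})\neq0$ because $\mathbf{V}_1$ is invertible.

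\textbf{Cross-check.} Alternatively, the result follows from Theorem~\ref{theor_2} via Remark~\ref{remark_1}, since the columns of $\mathbf{P}$ are eigenvectors of $\mathbf{M}(\lambda)$. The direct verification above is preferable because it makes the role of the factorization explicit.
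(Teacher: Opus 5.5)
Your proof is correct and takes essentially the same route as the paper. Both verify the eigenpairs directly from the factorization $\mathbf{S}(\lambda)=(\lambda\mathbf{1}-\mathbf{A}_2)(\lambda\mathbf{1}-\mathbf{A}_1)$, and your identity $(\rho_i\mathbf{1}-\mathbf{A}_1)\operatorname{col}_i(\mathbf{V}_1\mathbf{Q})=-\operatorname{col}_i(\mathbf{V}_2)$ is a cleaner packaging of the paper's identity \eqref{eq_VQ_1}; your check that $\operatorname{col}_i(\mathbf{V}_1\mathbf{Q})\neq0$, via invertibility of $\mathbf{D}_1\mathbf{Q}-\mathbf{Q}\mathbf{D}_2$, is a small addition that the paper leaves implicit.
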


\begin{proof}
  We notice that for any~$N\times N$ matrix~$\mathbf{R}$,
  $\operatorname{col}_{i}(\mathbf{R})=\mathbf{R}\operatorname{col}_{i}(\mathbf{1})$,
  with~$\mathbf{1}$ the identity matrix. Proceeding by direct
  calculation, we have
  \begin{align}
    \mathbf{S}(\lambda_{i})\operatorname{col}_{i}(\mathbf{V}_{1})   
    &=\mathbf{S}(\lambda_{i})\mathbf{V}_{1}\operatorname{col}_{i}(\mathbf{1})
      \nonumber\\
    & =\mathbf{V}_{1}\left(  \mathbf{D}_{1}\mathbf{Q}-\mathbf{Q}\mathbf{D}_{2}\right)
      \left(\lambda_{i}\mathbf{1}-\mathbf{D}_{2}\right)
      \left(\mathbf{D}_{1}\mathbf{Q}-\mathbf{Q}\mathbf{D}_{2}\right)^{-1}
      \left(\lambda_{i}\mathbf{1}-\mathbf{D}_{1}\right)
      \operatorname{col}_{i}\left(\mathbf{1}\right),
  \end{align}
  since~$\operatorname{col}_{i}(\lambda_{i}\mathbf{1}-\mathbf{D}_{1})=0$
  and~$(\lambda_{i}\mathbf{1}-\mathbf{D}_{1}) \operatorname{col}_{i}(
  \mathbf{1}) =\operatorname{col}_{i}(
  \lambda_{i}\mathbf{1}-\mathbf{D}_{1})=0,$ we conclude that
\begin{align}
  \mathbf{S}(\lambda_{i}) \operatorname{col}_{i}( \mathbf{V}_{1} ) = 0\,.
\end{align}
For the other pair,
\begin{align}
  \mathbf{S}(\rho_{i})\operatorname{col}_{i}( \mathbf{V}_{1}\mathbf{Q} )
  &=\mathbf{S}(\rho_{i})\mathbf{V}_{1}\mathbf{Q}\operatorname{col}_{i}(\mathbf{1})\nonumber\\
  &=\mathbf{V}_{1}( \mathbf{D}_{1}\mathbf{Q}-\mathbf{Q}\mathbf{D}_{2})
    (\rho_{i}I-\mathbf{D}_{2}) ( \mathbf{D}_{1}\mathbf{Q}-\mathbf{Q}\mathbf{D}_{2})^{-1}
    (\rho_{i}\mathbf{1}-\mathbf{D}_{1})  \mathbf{Q}\operatorname{col}_{i}(\mathbf{1})\,.
    \label{eq_SVQ_1}
\end{align}
We observe that
\begin{align}
  &(\rho_{i}\mathbf{1}-\mathbf{D}_{2})
   (\mathbf{D}_{1}\mathbf{Q}-\mathbf{Q}\mathbf{D}_{2})^{-1}
   (\rho_{i}\mathbf{1}-\mathbf{D}_{1})\mathbf{Q}\nonumber\\
  &=(\rho_{i}\mathbf{1}-\mathbf{D}_{2})
    (\mathbf{D}_{1}\mathbf{Q}-\mathbf{Q}\mathbf{D}_{2})^{-1}
    \left(\mathbf{Q}(\rho_{i}\mathbf{1}-\mathbf{D}_{2})
    -(\mathbf{D}_{1}\mathbf{Q}-\mathbf{Q}\mathbf{D}_{2})\right),\nonumber\\
  &=\left((\rho_{i}\mathbf{1}-\mathbf{D}_{2})
    (\mathbf{D}_{1}\mathbf{Q}-\mathbf{Q}\mathbf{D}_{2})^{-1}
    \mathbf{Q}-\mathbf{1}\right)
    (\rho_{i}\mathbf{1}-\mathbf{D}_{2}),\label{eq_VQ_1}
\end{align}
where we added and subtracted the term~$\mathbf{Q}\mathbf{D}_{2}$ in
the first equality and distribute the product in second one.

Again, since~$\operatorname{col}_{i}(
\rho_{i}\mathbf{1}-\mathbf{D}_{2}) =0$
then~$(\rho_{i}\mathbf{1}-\mathbf{D}_{2})\operatorname{col}_{i}(
\mathbf{1}) =0$. Thus, we conclude using this, \eqref{eq_SVQ_1}
and~\eqref{eq_VQ_1} that
\begin{align}
  \mathbf{S}( \rho_{i})  \operatorname{col}_{i}( \mathbf{V}_{1}\mathbf{Q}) = 0
\end{align}

\end{proof}

%%%%%%%%%%%%%%%%%%%%%%%%%%%%%%%%%%%%%%%%%%%%%%%%%%%%%%%%%%%%%
\subsubsection{The~$B=0$ case}
%%%%%%%%%%%%%%%%%%%%%%%%%%%%%%%%%%%%%%%%%%%%%%%%%%%%%%%%%%%%%

We observe that if~$\mathbf{B}=0$ and~$\mathbf{M}(\lambda)$ is
diagonalizable with real eigenvalues (see Eqns.~\eqref{Eq_M_P_1},
\eqref{def_D1_1} and~\eqref{def_D2_1}), then using
Eqn.~\eqref{eq_AC_1}, we find
\begin{align}
  \mathbf{A}^{-1}\mathbf{C}=-\mathbf{V}_{1}\mathbf{D}_{1}^{2}\mathbf{V}_{1}^{-1}\,.
\end{align}
It follows that~$-\mathbf{A}^{-1}\mathbf{C}$ is diagonalizable with
real eingevalues~$\{\lambda_i^{2}\}$. On the other hand,
\begin{align}
  (\lambda^{2}\mathbf{1}+\mathbf{A}^{-1}\mathbf{C})
  &= \mathbf{V}_{1}
    \left(  \lambda \mathbf{1}-\mathbf{D}_{1}^{2}\right)  \mathbf{V}_{1}^{-1}\nonumber\\
  &=\mathbf{V}_{1}
    \left(  \lambda \mathbf{1}+\mathbf{D}_{1}\right)
    \left(  \lambda \mathbf{1}-\mathbf{D}_{1}\right)
    \mathbf{V}_{1}^{-1}, \label{eq_B_0_1} 
\end{align}
from which we easily conclude that~$\mathbf{Q}=\mathbf{1}$. We notice
that~$\mathbf{D}_{1}\mathbf{Q}-\mathbf{Q}\mathbf{D}_{2}=2\mathbf{D}_{1}$
is invertible if and only if~$\lambda_i$ does not vanish. Therefore,
using theorem~\ref{theor_M_S}, $\mathbf{M}(\lambda)$ is diagonalizable
if~$-\mathbf{A}^{-1}\mathbf{C}$ is diagonalizable with non-vanishing
(positive) and real eigenvalues~$\{\lambda_i^{2}\}$. This conclusion
is consistent with the results obtained in~\cite{KreOrt01}.

We now comment on an alternative way to derive Eqn.~\eqref{eq_B_0_1}.
If~$\mathbf{B}=0$, using Eqn.~\eqref{eq_AB_1}, we find
\begin{align}
  \mathbf{D}_{2}=
  -(\mathbf{D}_{1}\mathbf{Q}-\mathbf{Q}\mathbf{D}_{2})^{-1}
  \mathbf{D}_{1}( \mathbf{D}_{1}\mathbf{Q}-\mathbf{Q}\mathbf{D}_{2}).
\end{align}
Substituting this expression into Eqn.~\eqref{eq_AB_1__n} and
subsequently into~\eqref{S_Seg_O_1}, we arrive at
Eqn.~\eqref{eq_B_0_1}.

%%%%%%%%%%%%%%%%%%%%%%%%%%%%%%%%%%%%%%%%%%%%%%%%%%%%%%%%%%%%%%%%%%%%%%%
\subsection{Uniformity condition}\label{Uni_cond}
%%%%%%%%%%%%%%%%%%%%%%%%%%%%%%%%%%%%%%%%%%%%%%%%%%%%%%%%%%%%%%%%%%%%%%%%%

To satisfy the uniformity condition of
definition~\ref{defn:weak_strong_first_order} we need to
bound~$\mathbf{P}$ and~$\mathbf{P}^{-1}$. We notice that~$\mathbf{P}$
(see Eqn.~\eqref{Eq_P_1_a}) can be written as
\begin{align}
\mathbf{P}=\mathbf{P}_{1}\mathbf{P}_{2}\mathbf{P}_{3}\mathbf{P}_{4},
\end{align}
where
\begin{align}
  \mathbf{P}_{1} \equiv \left[
  \begin{array}[c]{cc}
    \mathbf{V}_{1} & 0\\
    0 & \mathbf{V}_{1}
  \end{array}
	\right],\quad
  \mathbf{P}_{2} \equiv \left[
  \begin{array}[c]{cc}
  \mathbf{1} & 0\\
  \mathbf{D}_{1} & \mathbf{1}
  \end{array}
        \right],\quad
  \mathbf{P}_{3}\equiv\left[
  \begin{array}[c]{cc}
    \mathbf{1} & 0\\
    0 & \mathbf{Q}\mathbf{D}_{2}-\mathbf{D}_{1}\mathbf{Q}
  \end{array}
        \right],\quad
  \mathbf{P}_{4}\equiv\left[
  \begin{array}[c]{cc}
    \mathbf{1} & \mathbf{Q}\\
    0 & \mathbf{1}
  \end{array}
        \right]\,.
\end{align}
From which it follows
that~$\mathbf{P}^{-1}=\mathbf{P}_{4}^{-1}\mathbf{P}_{3}^{-1}
\mathbf{P}_{2}^{-1}\mathbf{P}_{1}^{-1}$.  Using the Cauchy-Schwarz
inequality for the spectral norm, we readily derive the following
lemma.

\begin{lemma} \label{lemma_UC} There exist constants~$C_{1,2}$,
  independent of~$\hat{k}_{i}$ such
  that~$\left\Vert \mathbf{P}(\hat{k}_{i})\right\Vert \leq C_{1}$
  and~$\left\Vert \mathbf{P}^{-1}(\hat{k}_{i})\right\Vert \leq C_{2}$,
  for all~$\hat{k}_{i}$, if and only if there are constants~$c_{1,2}$
  with
  \begin{align}
    \left\Vert \mathbf{V}_{1}\right\Vert ,\left\Vert \mathbf{Q}\right\Vert,
    \left\Vert \mathbf{Q}\mathbf{D}_{2}
    -\mathbf{D}_{1}\mathbf{Q}\right\Vert  \leq c_{1},\qquad
    \left\Vert \mathbf{V}_{1}^{-1}\right\Vert ,\left\Vert
    (\mathbf{Q}\mathbf{D}_{2}-\mathbf{D}_{1}\mathbf{Q})^{-1}
    \right\Vert \leq c_{2},\label{eqn:uniformity}
  \end{align}
  for all~$\hat{k}_{i}$, and with~$c_{1,2}$ independent
  of~$\hat{k}_{i}$.
\end{lemma}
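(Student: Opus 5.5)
The plan is to work with the factorization $\mathbf{P}=\mathbf{P}_1\mathbf{P}_2\mathbf{P}_3\mathbf{P}_4$ and $\mathbf{P}^{-1}=\mathbf{P}_4^{-1}\mathbf{P}_3^{-1}\mathbf{P}_2^{-1}\mathbf{P}_1^{-1}$, and to control each factor by submultiplicativity of the spectral norm. We also use that the norm of a block matrix bounds the norm of each of its blocks and is bounded by the sum of their norms. Write $\mathbf{E}\equiv\mathbf{Q}\mathbf{D}_2-\mathbf{D}_1\mathbf{Q}$. One preliminary fact is needed in both directions: $\mathbf{D}_{1,2}$ are bounded uniformly in $\hat{k}_i$. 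The entries of $\mathbf{D}_{1,2}$ are eigenvalues of $-\mathbf{M}(0)$, so their moduli are at most $\Vert\mathbf{M}(0)\Vert$. This norm is bounded on the unit sphere, because $\mathbf{A}^{-1}\mathbf{B}$ and $\mathbf{A}^{-1}\mathbf{C}$ are polynomial (linear, respectively quadratic) in $\hat{k}_i$.

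($\Leftarrow$) This direction is direct. We have $\Vert\mathbf{P}_1\Vert=\Vert\mathbf{V}_1\Vert$ and $\Vert\mathbf{P}_1^{-1}\Vert=\Vert\mathbf{V}_1^{-1}\Vert$. For the lower triangular factor, $\Vert\mathbf{P}_2\Vert$ and $\Vert\mathbf{P}_2^{-1}\Vert$ are both at most $1+\Vert\mathbf{D}_1\Vert$, since $\mathbf{P}_2^{-1}$ has $-\mathbf{D}_1$ in the lower-left block. Next, $\Vert\mathbf{P}_3\Vert\le 1+\Vert\mathbf{E}\Vert$ and $\Vert\mathbf{P}_3^{-1}\Vert\le 1+\Vert\mathbf{E}^{-1}\Vert$. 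Finally, $\Vert\mathbf{P}_4\Vert$ and $\Vert\mathbf{P}_4^{-1}\Vert$ are at most $1+\Vert\mathbf{Q}\Vert$. Multiplying these bounds gives $C_{1,2}$ in terms of $c_{1,2}$ and the bound on $\mathbf{D}_1$.

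($\Rightarrow$) First, $\Vert\mathbf{V}_1\Vert\le\Vert\mathbf{P}\Vert\le C_1$, since $\mathbf{V}_1$ is a block of $\mathbf{P}$. Next we compute $\mathbf{P}^{-1}$ blockwise from the factorization. Its bottom row is $[-\mathbf{E}^{-1}\mathbf{D}_1\mathbf{V}_1^{-1},\ \mathbf{E}^{-1}\mathbf{V}_1^{-1}]$. Its top row is $[\mathbf{V}_1^{-1}+\mathbf{Q}\mathbf{E}^{-1}\mathbf{D}_1\mathbf{V}_1^{-1},\ -\mathbf{Q}\mathbf{E}^{-1}\mathbf{V}_1^{-1}]$. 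Every one of these blocks therefore has norm at most $C_2$. In particular $\mathbf{V}_1^{-1}=\mathrm{TL}+\mathbf{Q}\,\mathrm{BL}$, where TL and BL denote the top-left and bottom-left blocks of $\mathbf{P}^{-1}$. Hence $\Vert\mathbf{V}_1^{-1}\Vert\le C_2(1+\Vert\mathbf{Q}\Vert)$, and everything reduces to bounding $\mathbf{Q}$.

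This bound on $\mathbf{Q}$ is the main obstacle. The estimate $\Vert\mathbf{Q}\Vert\le\Vert\mathbf{V}_1^{-1}\Vert\,\Vert\mathbf{V}_1\mathbf{Q}\Vert$ is circular, and for an arbitrary choice of the $N$ independent columns in the proof of Lemma~\ref{Lemma_1}, $\mathbf{Q}$ need not be bounded. I would exploit the freedom left in that proof. Among all choices of $N$ columns of the $N\times 2N$ matrix $\mathbf{V}$, we select, for each $\hat{k}_i$, the one maximizing $|\det\mathbf{V}_1|$; this is nonzero because $\mathbf{V}$ has maximal rank. By Cramer's rule, each entry of $\mathbf{Q}$ solves $\mathbf{V}_1\mathbf{Q}=\mathbf{V}_2$, where $\mathbf{V}_2$ denotes the remaining $N$ columns of $\mathbf{V}$. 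Each such entry is a ratio of the determinant of $\mathbf{V}_1$ with one column replaced by a column of $\mathbf{V}$, over $\det\mathbf{V}_1$, so it has modulus at most $1$. Hence $\Vert\mathbf{Q}\Vert\le N$, and with it $\Vert\mathbf{V}_1^{-1}\Vert$ is bounded.

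With $\mathbf{V}_1^{\pm1}$, $\mathbf{Q}$ and $\mathbf{D}_{1,2}$ bounded, the remaining two estimates follow. First, $\Vert\mathbf{E}\Vert\le\Vert\mathbf{Q}\Vert(\Vert\mathbf{D}_1\Vert+\Vert\mathbf{D}_2\Vert)$. Second, $\mathbf{E}^{-1}=(\mathbf{E}^{-1}\mathbf{V}_1^{-1})\mathbf{V}_1$, so $\Vert\mathbf{E}^{-1}\Vert\le C_2\Vert\mathbf{V}_1\Vert\le C_2C_1$. This yields the constants $c_{1,2}$ and completes the proof.
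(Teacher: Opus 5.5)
Your proof is correct, and it is substantially more complete than the paper's. The paper's entire argument is the factorization $\mathbf{P}=\mathbf{P}_1\mathbf{P}_2\mathbf{P}_3\mathbf{P}_4$ plus submultiplicativity of the spectral norm. That is exactly your ($\Leftarrow$) direction, except that the paper silently needs a uniform bound on $\mathbf{D}_1$ (which enters through $\mathbf{P}_2$); you supply it via $|(\mathbf{D}_{1,2})_{ii}|\le\Vert\mathbf{M}(0)\Vert$.

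For ($\Rightarrow$) the paper gives no argument. Your observation that none can work for an arbitrary admissible column choice is right. Take $N=2$ and the eigenvector columns $(e_1,e_1)$, $(e_1+\epsilon e_2,-e_1-\epsilon e_2)$, $(e_2,2e_2)$, $(e_2,-2e_2)$. Then $\mathbf{P}^{\pm1}$ stays bounded as $\epsilon\to0$, while $\det\mathbf{V}_1=\epsilon$ and $\Vert\mathbf{Q}\Vert\sim1/\epsilon$.

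Your route handles this correctly. You read $\mathbf{V}_1$, $\mathbf{E}^{-1}\mathbf{V}_1^{-1}$ and the combination $\mathrm{TL}+\mathbf{Q}\,\mathrm{BL}=\mathbf{V}_1^{-1}$ off as blocks of $\mathbf{P}^{\pm1}$. You then control $\mathbf{Q}$ by choosing the columns that maximize $|\det\mathbf{V}_1|$ and applying Cramer's rule. Two points deserve to be stated explicitly:
\begin{itemize}
\item The reselection amounts to $\mathbf{P}\mapsto\mathbf{P}\Pi$ with $\Pi$ a permutation matrix, and $\mathbf{D}$ is permuted accordingly. Hence $\Vert\mathbf{P}\Pi\Vert=\Vert\mathbf{P}\Vert$ and $\Vert(\mathbf{P}\Pi)^{-1}\Vert=\Vert\mathbf{P}^{-1}\Vert$, so the hypotheses carry over.
\item The lemma must therefore be read as ``for a suitable, possibly $\hat{k}_i$-discontinuous, choice of $\mathbf{V}_1$ and $\mathbf{Q}$.''
\end{itemize}

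As for what each approach buys: the paper's one-line argument covers the direction needed to show that condition~3 of Definition~\ref{def_strong_hyp_1} implies condition b) of Definition~\ref{defn:weak_strong_first_order}. Your argument is what supports the converse implication, and hence the equivalence of the two definitions claimed in the paper.
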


%%%%%%%%%%%%%%%%%%%%%%%%%%%%%%%%%%%%%%%%%%%%%%%%%%%%%%%%%%%%%
\section{Strong hyperbolicity at second order in time derivatives}
\label{section:def_strong_hyp_second_ord_1}
%%%%%%%%%%%%%%%%%%%%%%%%%%%%%%%%%%%%%%%%%%%%%%%%%%%%%%%%%%%%%

As discussed above, our goal is to give a formulation of strong
hyperbolicity for fully second order PDEs~\eqref{eq_K_1}, using only
the second order pencil~$\mathbf{S}(\lambda)$. Our definition is
formulated in the constant coefficient case as we focus on the
algebraic conditions, but it can be extended to the nonlinear setting
by imposing additional smoothness conditions. Following
Theorem~\ref{theo_1} and Lemma~\ref{lemma_UC}, the proposed definition
is:

\begin{definition}
  \label{def_strong_hyp_1} The fully second order PDE system~\eqref{eq_K_1},
  with~$\mathfrak{N}^{ab}$ constant and $\mathfrak{N}^{ab}n_an_b$
  invertible, is called strongly hyperbolic if the second order pencil
\begin{equation}
  \mathbf{S}(\lambda) =\lambda^{2}\mathbf{1}
  +\lambda \mathbf{A}^{-1}\mathbf{B}+\mathbf{A}^{-1}\mathbf{C},
\end{equation}
with~$\mathbf{A}\equiv \mathfrak{N}^{ab}n_{a}n_{b}$, $\mathbf{B}\equiv
2\mathfrak{N}^{ic}n_{c}\hat{k}_{i}$ and~$\mathbf{C}\equiv
\mathfrak{N}^{ij}\hat{k}_{i}\hat{k}_{j}$ (here~$\hat{k}_{i}$ is the
normalized wave vector, not colinear with~$n_a$), satisfies the
following properties for all $\hat{k}_{i}\neq0$:
\begin{enumerate}
\item $\mathbf{S}(\lambda)$ has real eigenvalues,

\item The algebraic and geometric multiplicities of these eigenvalues,
defined after Eqn.~\eqref{detS_s_1}, are equal,

\item The uniformity conditions~\ref{eqn:uniformity} hold.
\end{enumerate}  
If in addition to these conditions, the eigenvalues
of~$\mathbf{S}(\lambda)$ are pairwise distinct, system~\eqref{eq_K_1}
is called strictly hyperbolic. On the other hand, if either condition
2 or 3 fail to hold, system~\eqref{eq_K_1} is called weakly
hyperbolic.
\end{definition}

In physical applications the uniformity condition is often, though not
always, verified by virtue of continuity of the eigenvectors on the
normalized wave vector. This is very helpful since this condition is
the trickiest part of the definition to check in practice. The stated
definition is particularly useful for systems written in a covariant
fashion without an explicit~$d+1$ decomposition. Moreover, working
with the matrix pencil is convenient and yields the same results as
either the time reduction or the time-space reduction approach. It is
expected that a version of this definition can be made for yet higher
order systems, but we postpone this for future work.
  
As mentioned in the introduction, we now present a theorem showing
that repeatedly applying the same strongly hyperbolic operator yields
a weakly hyperbolic system.
  
\begin{theorem}\label{theorem_D2}
  Consider the first-order system
  \begin{align}
    \left( \mathbf{1} \partial_t + \mathbf{B}^i \partial_i \right) \phi = 0
  \end{align}
  where~$\mathbf{1}$ is the identity matrix, $\mathbf{B}^i$
  are~$N\times N$ matrices, and~$\phi$ is an~$N$-component vector
  field. Suppose this system is strongly hyperbolic. Then the
  second-order system obtained by applying the same first-order
  operator twice,
  \begin{align}
    \left( \mathbf{1}\partial_t + \mathbf{B}^i \partial_i \right)
    \left( \mathbf{1}\partial_t + \mathbf{B}^i \partial_i \right)\phi = 0
  \end{align}
  is always weakly hyperbolic.
\end{theorem}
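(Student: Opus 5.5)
We write the composed operator in the form~\eqref{eq_1_2_orden_1} and compute its second order pencil directly. Expanding the square gives $\mathbf{A}=\mathbf{1}$, $\mathbf{B}^i=2\mathbf{B}^i$ and $\mathbf{C}^{ij}=\mathbf{B}^{(i}\mathbf{B}^{j)}$. With the sign conventions of the paper (frequency $\lambda$ for $\partial_t$ and $\hat{k}_i$ for $\partial_i$), we get
\begin{align*}
  \mathbf{S}(\lambda)=\lambda^{2}\mathbf{1}+2\lambda\mathbf{B}^{\hat k}+(\mathbf{B}^{\hat k})^{2}=(\lambda\mathbf{1}+\mathbf{B}^{\hat k})^{2},
\end{align*}
where $\mathbf{B}^{\hat k}\equiv\mathbf{B}^i\hat{k}_i$. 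Strong hyperbolicity of the first order system means $\mathbf{B}^{\hat k}=\mathbf{T}\mathbf{\Lambda}\mathbf{T}^{-1}$ with $\mathbf{\Lambda}=\textrm{diag}(\mu_1,\dots,\mu_N)$ real. Hence
\begin{align*}
  \mathbf{S}(\lambda)=\mathbf{T}(\lambda\mathbf{1}+\mathbf{\Lambda})^{2}\mathbf{T}^{-1},
\end{align*}
and
\begin{align*}
  \det\mathbf{S}(\lambda)=\prod_j(\lambda+\mu_j)^{2}.
\end{align*}
The eigenvalues are therefore real, so the system is at least weakly hyperbolic. It remains to show that condition 2 of Definition~\ref{def_strong_hyp_1} fails.

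Fix a distinct eigenvalue $\sigma=-\mu$ of $\mathbf{S}$, and let $m$ be the multiplicity of $\mu$ in $\mathbf{\Lambda}$. Its algebraic multiplicity is $q=2m$. For the geometric multiplicity, note that
\begin{align*}
  \mathbf{S}(\sigma)=\mathbf{T}\,\textrm{diag}\big((\mu_j-\mu)^2\big)\,\mathbf{T}^{-1},
\end{align*}
whose kernel is exactly the span of the $m$ eigenvectors of $\mathbf{B}^{\hat k}$ with eigenvalue $\mu$. Hence $s=m<2m=q$, since $m\ge1$.

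By Theorem~\ref{theo_1}, or equivalently by Lemma~\ref{lemma_1} applied to $\mathbf{M}(\lambda)$, the reduced pencil $\mathbf{M}(\lambda)$ is then not diagonalizable. This holds for every $\hat{k}_i$, and in particular for at least one, so the system is weakly but not strongly hyperbolic.

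An alternative route to the same conclusion uses Theorem~\ref{theor_M_S}. The factorization $\mathbf{S}=(\lambda\mathbf{1}-\mathbf{A}_2)(\lambda\mathbf{1}-\mathbf{A}_1)$ with $\mathbf{A}_1=\mathbf{A}_2=-\mathbf{B}^{\hat k}$ would force $\mathbf{D}_1=\mathbf{D}_2$ up to ordering. Then $\mathbf{D}_1\mathbf{Q}-\mathbf{Q}\mathbf{D}_2$ has zero diagonal entries in matched positions and cannot be invertible. We find the multiplicity count cleaner, however.

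The main point requiring care is the claim that the kernel of $\mathbf{S}(\sigma)$ is only $m$-dimensional. This relies on diagonalizability of $\mathbf{B}^{\hat k}$: squaring a diagonal matrix does not enlarge its kernel, even though the determinant picks up double roots. One also needs the paper's convention that the multiplicities of $\mathbf{S}$ are defined via $\det\mathbf{S}$ as in~\eqref{detS_s_1}, so that $q=2m$ is indeed the relevant algebraic multiplicity. The word ``always'' in the statement is justified because the first order system is assumed strongly hyperbolic, so $\mathbf{B}^{\hat k}$ has at least one real eigenvalue for each $\hat{k}_i$.
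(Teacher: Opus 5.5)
Your proof is correct and is essentially the paper's argument: diagonalize $\mathbf{B}^{\hat k}$, note that $\mathbf{S}(\lambda)=\mathbf{T}(\lambda\mathbf{1}+\mathbf{\Lambda})^{2}\mathbf{T}^{-1}$, and compare the algebraic multiplicity $2m$ with the geometric multiplicity $m$ of each eigenvalue; your version is a bit more explicit in computing $\mathbf{S}$ from the expanded operator and in linking back to $\mathbf{M}(\lambda)$ via Theorem~\ref{theo_1}. The optional aside via Theorem~\ref{theor_M_S} is only loosely justified: zero entries of $\mathbf{D}_1\mathbf{Q}-\mathbf{Q}\mathbf{D}_2$ in matched positions do not by themselves force singularity. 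The actual reason is that each column of $\mathbf{V}_1\mathbf{Q}$ lies in an $m$-dimensional eigenspace already spanned by the matching columns of $\mathbf{V}_1$, which makes $\mathbf{D}_1\mathbf{Q}-\mathbf{Q}\mathbf{D}_2$ vanish identically, but your main argument does not rely on this.
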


We recall that when we say that a system is \emph{weakly hyperbolic},
we mean that, after introducing all spatial derivatives of $\phi$ as
auxiliary variables so as to recast the system in first-order form,
the resulting first-order evolution equations are weakly hyperbolic in
the standard sense.
    
To clarify this conclusion, we present a simple example in
section~\ref{Awe} (see the case~$a=b$), where the theorem’s result is
examined explicitly. The proof of the theorem is brief.
  
 \begin{proof}
   Since the first-order system is strongly hyperbolic, its principal
   symbol satisfies
   \begin{align}
     (\mathbf{1}\,\lambda + \mathbf{B}^{i}\hat{k}_{i})
     = \mathbf{P}(\lambda \mathbf{1} - \mathbf{D})\mathbf{P}^{-1},
   \end{align}
   where~$\mathbf{D} = \mathrm{diag}(\lambda_{1},\ldots,\lambda_{N})$
   has real eigenvalues, $\hat{k}_{i}$ is the wave vector,
   and~$\mathbf{P}=\mathbf{P}(\hat{k})$ is the corresponding
   invertible diagonalization matrix.
   
   The principal symbol of the second-order system is therefore
   \begin{align}
     (\mathbf{1}\,\lambda + \mathbf{B}^{j}\hat{k}_{j})
     (\mathbf{1}\,\lambda + \mathbf{B}^{i}\hat{k}_{i})
     = \mathbf{P}(\lambda \mathbf{1} - \mathbf{D})^{2}\mathbf{P}^{-1},
   \end{align}
   with
   \begin{align}
     (\lambda \mathbf{1} - \mathbf{D})^{2}
     = \mathrm{diag}((\lambda - \lambda_{1})^{2},\ldots,(\lambda - \lambda_{N})^{2}).
   \end{align}
   If all eigenvalues~$\lambda_i$ are distinct, then each eigenvalue
   of the squared symbol has algebraic multiplicity~$2$ but geometric
   multiplicity~$1$, so the system is only weakly hyperbolic. If some
   of the~$\lambda_i$ coincide, the geometric multiplicity remains
   equal to half of the algebraic multiplicity, and the system is
   still weakly hyperbolic.
\end{proof}

%%%%%%%%%%%%%%%%%%%%%%%%%%%%%%%%%%%%%%%%%%%%%%%%%%%%%%%%%%%%%
\section{Applications}
\label{section:Applications}
%%%%%%%%%%%%%%%%%%%%%%%%%%%%%%%%%%%%%%%%%%%%%%%%%%%%%%%%%%%%%

We now we apply the definition of
section~\ref{section:def_strong_hyp_second_ord_1} to different sets of
partial differential equations to demonstrate its utility.

%%%%%%%%%%%%%%%%%%%%%%%%%%%%%%%%%%%%%%%%%%%%%%%%%%%%%%%%%%%%%
\subsection{Almost wave equation in 1+1} \label{Awe}
%%%%%%%%%%%%%%%%%%%%%%%%%%%%%%%%%%%%%%%%%%%%%%%%%%%%%%%%%%%%%

In this subsection, we study the following~$1+1$ PDE for the scalar
function $\phi$,
\begin{align}
  \left(\partial_{t}^{2}-(a+b) \partial_{t}\partial_{x}
  +ab\partial_{x}^{2}\right)  \phi=0, \label{eq_toy_1}
\end{align}
with~$a$, $b$ constant, and initial data~$\phi(t=0,x) =f(x)$,
$\partial_{t}\phi(t=0,x) =g(x)$.

We show that this PDE is strongly hyperbolic when~$a\neq b$ and weakly
hyperbolic when~$a=b$. In addition, following
subsection~\ref{S_M_M_1}, we compute~$\mathbf{Q}$, $\mathbf{D}_{1}$,
$\mathbf{D}_{2}$, $\mathbf{V}_{1}$ and
$\mathbf{QD}_{2}-\mathbf{D}_{1}\mathbf{Q}$. We show that when the
eigenvalues `degenerate' (that is, when $a=b$)
then~$\det( \mathbf{QD}_{2}-\mathbf{D}_{1}\mathbf{Q})=0$, which
implies that the system is weakly hyperbolic.

We start by observing that Eqn.~\eqref{eq_toy_1} can be rewritten as
\begin{align}
  (\partial_{t}-a\partial_{x}) (\partial_{t}-b\partial_{x})\phi=0,
\end{align}
whose associated second-order pencil is
\begin{align}
  \mathbf{S}(\lambda)\equiv(\lambda-ak_{x})(\lambda-bk_{x})\,.
\end{align}
It is follows straightforwardly that the eigenpairs of the system
are~$(\lambda_{1}\equiv bk_{x}, \delta\phi_{1}\equiv1)$
and~$(\rho_{1}=ak_{x}, \delta\phi_{2}=1)$. If we consider the
normalized wave vector then~$k_{x}=\pm1$. For brevity we work below
with~$k_x=1$. The other case is similar.

%%%%%%%%%%%%%%%%%%%%%%%%%%%%%%%%%%%%%%%%%%%%%%%%%%%%%%%%%%%%%
\textbf{The case~$a\neq b$:}
since~$\det(\mathbf{S}(\lambda)) =\mathbf{S}(\lambda) $, if~$a\neq b$
then the algebraic~$q_{i}$ and geometric~$s_{i}$ multiplicities
of~$\mathbf{S}(\lambda)$ are simply
\begin{align}
q_{\lambda_{1}}=1, \quad q_{\rho_{1}}=1,\quad
s_{\lambda_{1}}=1, \quad s_{\rho_{1}}=1\,.
\end{align}
Therefore, condition 2 in Definition~\ref{def_strong_hyp_1} is
satisfied. It is straightforward to verify that conditions 1 and 3 are
also satisfied, and thus the PDE is strongly hyperbolic.

%%%%%%%%%%%%%%%%%%%%%%%%%%%%%%%%%%%%%%%%%%%%%%%%%%%%%%%%%%%%%
\textbf{The case~$a=b$:} when~$a=b$, we
obtain~$\lambda_{1}=\rho_{1}=ak_{x}$ then the algebraic and geometric
multiplicities of~$\mathbf{S}(\lambda)$ change instead to
\begin{align}
q_{\lambda_{1}=\rho_{1}}=2,\quad s_{\lambda_{1}=\rho_{1}}=1.
\label{ejem_alg_geom_1}
\end{align}
Since they are not equal, condition 2 is not satisfied, and the PDE is
weakly hyperbolic.

Following subsection~\ref{S_M_M_1}, it is straightforward to verify
that
\begin{align}
  \mathbf{V}_{1}&=1, &\quad &\mathbf{Q}=1,
  &\quad &\mathbf{V}_{1}\mathbf{Q}=1,
\end{align}
and that
\begin{align}
\mathbf{D}_{2}=\rho_{1}=ak_{x}, &\qquad \mathbf{D}_{1}=\lambda_{1}=bk_{x}.
\end{align}
Therefore
\begin{align}
  \mathbf{Q}\mathbf{D}_{2}-\mathbf{D}_{1}\mathbf{Q}
  =\rho_{1}-\lambda_{1}=\left(a-b\right)k_{x}\,.
\end{align}
We notices
that~$\det(\mathbf{Q}\mathbf{D}_{2}-\mathbf{D}_{1}\mathbf{Q})
=\left(a-b\right) k_{x}\neq0$ $\Leftrightarrow$ $a\neq b$, then as we
mention before, this system is strongly hyperbolic only when $a\neq
b$. In the special case~$b=-a$, we recover the standard wave
equation, which is, of course, strongly hyperbolic.

We observe that when~$a=b$, Eqn.~\eqref{eq_toy_1} reduces to
\begin{align}
  ( \partial_{t}- a \partial_{x} )^{2}\phi=0\,. \label{eq_wh_1}
\end{align}
If we reduce this equation to first order by defining, 
\begin{align}
  u \equiv ( \partial_{t}-a\partial_{x} ) \phi, \label{u_1}
\end{align}
then the final system becomes
\begin{align}
\left[
\begin{array}
  [c]{cc}%
  \left(  \partial_{t}-a\partial_{x}\right)  & 0\\
  0 & \left(  \partial_{t}-a\partial_{x}\right)
\end{array}
\right]  \left[
\begin{array}
  [c]{c}%
  u\\
  \phi
\end{array}
\right]  =\left[
\begin{array}
  [c]{c}%
  0\\
  u
\end{array}
\right]\,.\label{u_phi}
\end{align}
Since this system is diagonal in its principal part, it is strongly
hyperbolic. This appears to contradict our previous statement about
weak hyperbolicity. The key to resolving this apparent contradiction
lies in the type of first order reduction considered and that some
reductions have stronger properties than others.

The reduction~\eqref{u_phi} allows us to build an energy estimate,
see~\cite{Kre70}, that includes~$\phi$ and~$u$. This is not however
sufficient to guarantee that~$\phi$, $\partial_{t}\phi$,
and~$\partial_{x}\phi$ will remain bounded by their initial data over
time and thus to demonstrate well posedness on a norm that contains
all three. For this reason, we do not consider such reductions (such
as \eqref{u_phi}) in our framework. However, in view of
Theorem~\ref{theorem_D2}, it would be interesting to study them
systematically and, in particular, to understand the resulting norms
and the class of source terms that preserve the weaker estimates.

In addition, our definition of strong hyperbolicity accept reductions
to first order where all derivatives of the variables are defined as
new variables, that is,
\begin{align}
  s_{0}\equiv \partial_{t}\phi,
  \quad s_{1}\equiv\partial_{x}\phi, \label{s_0_s_1}
\end{align}
and, as explained earlier, other types of reductions as well.

In particular, we now explicitly analyze the weak hyperbolicity of the
resulting system of PDEs using the reduction~\eqref{s_0_s_1}. The
system is
\begin{align}
\left[
\begin{array}
  [c]{ccc}%
  \partial_{t} & 0 & 0\\
  0 & \left(  \partial_{t}-2a\partial_{x}\right)  & a^{2}\partial_{x}\\
  0 & -\partial_{x} & \partial_{t}\\
  \partial_{x} & 0 & 0
\end{array}
\right] \left[
\begin{array}
  [c]{c}%
  \phi\\
  s_{0}\\
  s_{1}%
\end{array}
\right]  =\left[
  \begin{array}
    [c]{c}%
    s_{0}\\
    0\\
    0\\
    s_{1}%
  \end{array}
  \right], \label{phi_s_0_s_1}
\end{align}
where we observe evolution equations for all variables and a
constraint that can be added to these evolution equations. However, as
can be easily verified, this constraint does not affect the
problematic principal part, the block associated with~$s_0$
and~$s_1$. This block is not diagonalizable, and the system is
therefore classified as weakly hyperbolic. This can alternatively be
confirmed applying to~\eqref{phi_s_0_s_1}, the singular value
decomposition technique described in~\cite{Aba17} or the first order
pencil technique described in~\cite{AbaReu18}.

%%%%%%%%%%%%%%%%%%%%%%%%%%%%%%%%%%%%%%%%%%%%%%%%%%%%%%%%%%%%%
\subsection{Electrodynamics}
%%%%%%%%%%%%%%%%%%%%%%%%%%%%%%%%%%%%%%%%%%%%%%%%%%%%%%%%%%%%%

We consider the Electrodynamic equations
\begin{align}
\nabla_{a}F^{ab}  &  =0,\label{eq_EF_1}\\
\nabla_{\lbrack a}F_{bc]}  &  =0. \label{eq_EF_2}%
\end{align}
over a 4 dimensional space time~$M$, with metric~$g_{ab}$.

We foliate~$M$ by spacelike hypersurfaces~$\Sigma_{t}$
with~$t\in\left[ 0,T\right]$ such that locally~$M=\left( -T,T\right)
\times\Sigma_t$. We define~$n_{a}$ as the normal covector orthogonal
to~$\Sigma_{t}$, and~$h_{ab}=g_{ab}+n_{a}n_{b}$ as the standard
projector onto $\Sigma_{t}$. It satisfies
that~$n^{a}h_{ab}=h_{ab}n^{b}=0$.

We notice that the expression~$n_{b}\nabla_{a}F^{ab}$
($=n_{b}h_{c}^{a}\nabla_{a}F^{cb}$) is a constraints, as it involves
only spacial derivatives~($h_{c}^{a}\nabla_{a}$) of~$F^{ab}$. That is,
if we wish to solve these equations in the free-evolution approach, we
must choose initial data for~$F^{cb}$ such
that~$n_{b}\nabla_{a}F^{ab}=0$ holds initially.

On the other hand, Eqn.~\eqref{eq_EF_2} can be solved locally, using
the vector pontential~$A_a$, as
\begin{align}
F_{bc}\equiv\nabla_{b}A_{c}-\nabla_{c}A_{b}. \label{eq_F_1}%
\end{align}
Therefore, Eqn.~\eqref{eq_EF_1} reduces to four second order equations
for~$A_a$,
\begin{align}
  \mathfrak{E}^{b}\equiv \nabla_{a}F^{ab}
  =\nabla_{a}\left(\nabla^{a}A^{b}-\nabla^{b}A^{a}\right)
  =0. \label{ele_ev_1}
\end{align}

These equations include one constraint and three evolution equations
for the four components of the vector potential. This means that an
additional evolution equation is required for the remaining
component. The process of adding this extra equation is known as gauge
fixing.

Next, following the approach in~\cite{KovRea20a,KovRea20}, we will
perform an extension, see~\cite{AbaReuHil24}, and perform a gauge
fixing. Subsequently, we will show the simplicity of our method for
verifying the strong hyperbolicity of the system, even in degenerate
cases that have not been studied before.

%%%%%%%%%%%%%%%%%%%%%%%%%%%%%%%%%%%%%%%%%%%%%%%%%%%%%%%%%%%%%
\subsection{Extension}
%%%%%%%%%%%%%%%%%%%%%%%%%%%%%%%%%%%%%%%%%%%%%%%%%%%%%%%%%%%%%

We define the tensor
\begin{align}
\hat{G}^{ba}\equiv \hat{g}^{ba}+\hat{f}^{ba},
\end{align}
where~$\hat{f}^{ba}$ is anti-symmetric, and~$\hat{g}^{ba}$ is a
Lorentzian metric for which~$n_{a}$ is timelike.

In order to give an evolution to the
constraint~$n_{b}\nabla_{a}F^{ab}$, we perform an extension of the
system modifing the equations~$\mathfrak{E}^{a}=0,$ as
\begin{align}
\mathfrak{E}^{b}+\hat{G}^{ba}\nabla_{a}Z+f_{1}^{b}\left(  Z\right)  =0.
\label{Ext_1}
\end{align}
Here~$Z$ is a new variable in our system and~$f^{a}\left( Z\right)$ is
a vector function of~$Z$, such that~$f_{1}^{a}\left( 0\right) =0$. In
general extensions, the function~$f_{1}^{a}\left(Z\right)$ is
typically used to include damping terms.

We observe that if~$Z$ initially vanishes and remains zero throughout
the entire evolution, this extension produces solutions to the
original system~\eqref{ele_ev_1}. We will show this happens in the
following analysis.
 
The variables are now~$A_{a}$ and~$Z$, we need to give initial data
for them over~$\Sigma_{0}$. The initial conditions are~$\left.
Z\right\vert _{\Sigma_{0}}=0$ and some value for~$A_b$ such that the
constraint~$\left. n_{b}\mathfrak{E}^{b}\right\vert _{\Sigma_{0}}=0$
is satisfied initially. Using this, we observe that
\begin{align}
  0=\left. n_{b}\mathfrak{E}^{b}+n_{b}\hat{G}^{ba}\nabla_{a}Z+n_{b}
  f_{1}^{b}\left( Z\right) \right\vert _{\Sigma_{0}}
  =n_{b}\hat{g}^{ba}n_{a} \left.   \left( n^{c}\nabla_{c}Z\right)
  \right\vert_{\Sigma_{0}},
\end{align}
from which we conclude that~$\left.\left( n^{c}\nabla_{c}Z\right)
\right\vert _{\Sigma_{0}}=0$.

On the other hand, if we take the divergence of (\ref{Ext_1}), and
recall that~$\nabla_{b}\nabla_{a}F^{ab}=0$ holds for any value
of~$F^{ab}$, we conclude that
\begin{align}
\hat{g}^{ba}\nabla_{b}\nabla_{a}Z+\left(  \nabla_{b}\hat{G}^{ba}\right)
\nabla_{a}Z+\nabla_{b}f_{1}^{b}\left(  Z\right)  =0\,.
\end{align}

We observe that the anti-symmetric part of~$\hat{G}^{ba}$ does not
play a roll in the principal symbol of this equation. Moreover,
since~$\hat{g}^{ba}$ is Lorentzian, this equation is a wave equation
up to lower-order terms and is therefore strongly hyperbolic and thus
has a well-posed initial value problem. In particular, given suitable
initial data, it admits a unique solution (at least locally in
time). The solution is~$Z=0$.

%%%%%%%%%%%%%%%%%%%%%%%%%%%%%%%%%%%%%%%%%%%%%%%%%%%%%%%%%%%%%
\subsection{Gauge fixing}
%%%%%%%%%%%%%%%%%%%%%%%%%%%%%%%%%%%%%%%%%%%%%%%%%%%%%%%%%%%%%

Equation~\eqref{Ext_1} has a gauge freedom, the
term~$n^{a}n^{b}\nabla_{a}\nabla_{b}\left( n.A\right) $ is not present
on it, so that there is not an evolution equation for~$n.A$. To fix
this gauge, we propose
\begin{align}
  \tilde{G}^{cd}\nabla_{c}A_{d}+f_{2}\left(  A_{b}\right)  =0.
  \label{eq_gauge_1}
\end{align}
Here~$f_{2}$ is a function of~$A_{a}$, which does not include
derivatives, the tensor~$\tilde{G}^{cq}$ is defined as
\begin{align}
\tilde{G}^{cq}\equiv\tilde{g}^{cq}+\tilde{f}^{cq},
\end{align}
where~$\tilde{f}^{cq}$ is anti-symmetric and~$\tilde{g}^{cq}$ is a
Lorentzian metric, for which~$n_a$ is timelike.

It is not hard to verify that this equation indeed provides the extra
evolution equation
\begin{align}
  0  &  =\left(  n_{c}\tilde{g}^{cd}n_{d}\right)
  \left( n^{a}\nabla_{a}\left( n.A \right)  \right) \nonumber\\
  &  +\left(  -\tilde{G}^{cd}n_{d}D_{c}\left(  n.A\right)
  - n_{c} \tilde{G}^{cd}n^{a}\nabla_{a}\left( h_{d}^{b}A_{b} \right)
  +\tilde{G}^{cd}D_{c}\left( h_{d}^{b}A_{b}\right) \right) + \dots 
\end{align}
where the ellipses are lower order terms and~$D_a\equiv
h_a^b\nabla_b$. In addition, using the previous initial data
for~$A_b$, it says the value of $n^{a}\nabla_{a}\left( n.A\right)$
over~$\Sigma_0$.

Finally, in order to make disappear~$Z$ and obtain only second order
evolution equation for~$A_b$, we couple the extension with the gauge
fixing as
\begin{align}
Z\equiv\tilde{G}^{cd}\nabla_{c}A_{d}+f_{2}\left(A_{b}\right) =0\,.
\end{align}
We notice that this is coherent since, as we showed before, $Z$
vanishes for~$t\geq0$.

The final system is 
\begin{align}
  0=\nabla_{a}\left(  \nabla^{a}A^{b}-\nabla^{b}A^{a}\right)
  +\hat{G}^{ba}\nabla_{a}\left(  \tilde{G}^{cd}\nabla_{c}A_{d}+f_{2}\right)
  + f_{1}^{b}\left(  Z\right)\,.  \label{ext_gau_1}
\end{align}
These are four second order evolution equations for the four component
of~$A_b$.

%%%%%%%%%%%%%%%%%%%%%%%%%%%%%%%%%%%%%%%%%%%%%%%%%%%%%%%%%%%%%
\subsection{Strong hyperbolicity}
%%%%%%%%%%%%%%%%%%%%%%%%%%%%%%%%%%%%%%%%%%%%%%%%%%%%%%%%%%%%%

In this subsection, we apply Definition~\ref{def_strong_hyp_1} of
strong hyperbolicity to Eqn.~\eqref{ext_gau_1}. We divide the analysis
into three parts. First, we introduce the second-order
pencil~$S(\lambda)$. Second, we examine the strong hyperbolicity of
the system and explicitly present its eigenvectors. Finally, we
discuss the diagonalization of the principal symbols in both first and
second order formulations.

%%%%%%%%%%%%%%%%%%%%%%%%%%%%%%%%%%%%%%%%%%%%%%%%%%%%%%%%%%%%%
\subsubsection{Second order pencil}
%%%%%%%%%%%%%%%%%%%%%%%%%%%%%%%%%%%%%%%%%%%%%%%%%%%%%%%%%%%%%

We notice that Eqn.~\eqref{ext_gau_1} can be written as
\begin{align}
  \left(\nabla_{a}\nabla^{a}A^{b}-\nabla_{a}\nabla^{b}A^{a}\right)
  +\hat{G}^{ba}\tilde{G}^{cd}\nabla_{a}\nabla_{c}A_{d}+\dots=0,
  \label{eq_Ele_1}
\end{align}
where the ellipses denote lower order derivative terms.

We define standard coordinate adapted to the foliation, as described
in~\cite{Alc08, Gou07, BonPalBon05}. We then perform a 3+1
decomposition to cast the (already) linearized equations in the form
of Eqn.~\eqref{eq_1_2_orden_1} and to construct the associated
second-order pencil~$\mathbf{S}(\lambda)$. Because hyperbolicity is
assessed pointwise, it suffices to work in a local frame at each
spacetime point~$p$. In particular, one may choose a locally inertial
(orthonormal) frame at~$p$ in which the background metric takes the
Minkowski form. In the calculations that follow, one may dispense the
use of a locally inertial frame and extend the analysis
straightforwardly to an arbitrary prescribed Lorentzian background,
but we avoid this here in order to simplify the discussion.
	
With these choices, the coordinate time vector can be taken to
coincide with the unit normal to the slices, so that
\begin{align}
  \partial_{t}\equiv n^{b}\partial_{b}.
\end{align}
It then follows that any derivative admits the decomposition
\begin{align}
  \partial_{b} =-\,n_{b}\,\partial_{t}+h_{b}^{c}\,\partial_{c},
\end{align}
As explained in Section~\ref{section:SHFORD}, to derive the
characteristic equation we consider high-frequency
perturbations of the form $A_{q}=e^{-\lambda
  t+k_{i}x^{i}}\,\delta A_{q}$, where~$k_i$ is normalized,
orthogonal to~$n_a$, and spacelike with respect
to~$g^{ab}$. This procedure is equivalent to the replacement
\begin{align}
  \partial_{b}\rightarrow l_{b}\equiv n_{b}\lambda+k_{b}.
  \label{eq_L_1}
\end{align}

Consequently, disregarding all lower-order terms, we obtain the
desired covariant characteristic equation
\begin{align}
  P^{bq}\delta A_{q}=\left(  \left(  l^{2}g^{qb}-g^{qc}l_{c}l^{b}\right)
  +\hat{G}^{ba}l_{a}\tilde{G}^{cq}l_{c}\right)  \delta A_{q}=0 \label{eq_P_1}%
\end{align}
where~$P^{bq}$ is the associated second order principal symbol.

To compute~$\mathbf{S}(\lambda)$, as defined in
Eqn.~\eqref{def_S_l_1}, we notice that
\begin{align}
  P^{bq}(\lambda)\equiv A^{b}{}_{a}S^{aq}(\lambda)
\end{align}
where~$A^{b}{}_{a}$ is given by~$A^{b}{}_{a}\equiv
n^{2}g^{bq}-n^{b}n^{q} +\hat{G}^{ba}n_{a}\tilde{G}^{cq}n_{c}$. On the
other hand, it is possible to demonstrate that
\begin{align}
  \det\left( P^{b}{}_{q}( \lambda) \right) = ( l_{a}g^{ab}l_{b} )^{2}(
  l_{c}\hat{g}^{cd}l_{d}) ( l_{e}\tilde{g}^{ef}l_{f} )
  =l^{4}\hat{l}^{2}\tilde{l}^{2} \label{eq_det_P_1}
\end{align}
which implies that~$A^{b}{}_{a}$ is invertible, since~$n_a$ is
timelike with respect to the three metrics
and~$\det(A^{b}{}_{a})=n^{4}\hat{n}^{2}\tilde{n}^{2}\neq0$.

Additionally, assuming~$l_{a}=\lambda n_{a}+k_{a}$, we can
rewrite~$l.\hat{g}.l $ as
\begin{align}
  l_{a}\hat{g}^{ab}l_{b}=\lambda^{2}\hat{n}^{2}
  +2(  n.\hat{g}.k) \lambda+\hat{k}^{2},\nonumber\\
  =\hat{n}^{2}( \lambda-\hat{\lambda}_{+} )  ( \lambda -\hat{\lambda}_{+} ),
\end{align}
and the same holds for~$l.g.l$ and~$l.\tilde{g}.l $, where
\begin{align}
  \lambda_{\pm} & \equiv\frac{-(n.k)\pm\sqrt{(n.k)^{2}-n^{2}k^{2}}}{n^{2}},\\
  \hat{\lambda}_{\pm}  &  \equiv\frac{-\left(  n.\hat{g}.k\right)  \pm\sqrt{\left(
      n.\hat{g}.k\right)  ^{2}-\hat{n}^{2}\hat{k}^{2}}}{\hat{n}^{2}},\\
  \tilde{\lambda}_{\pm}  &  \equiv \frac{-(n.\tilde{g}.k) \pm
    \sqrt{( n.\tilde{g}.k)^{2}-\tilde{n}^{2}\tilde{k}^{2}}}
	{\tilde{n}^{2}}.  \label{eigenva_1}
\end{align}
Finally, the determinant of~$S^{b}{}_q(\lambda)$ is
\begin{align}
	\det\left( S^{b}{}_{q}( \lambda ) \right) =
	( \lambda-\lambda_{+})^{2}
        ( \lambda-\lambda_{-} )^{2}
	( \lambda-\hat{\lambda}_{+}) 
	( \lambda-\hat{\lambda}_{-})
	( \lambda-\tilde{\lambda}_{+}) 
	( \lambda-\tilde{\lambda}_{-})
        \label{eq_det_ele_1}%
\end{align}

These eigenvalues~$\lambda_{\pm}$, $\hat{\lambda}_{\pm}$
and~$\tilde{\lambda}_{\pm}$ must be real in order to satisfy condition
1 in Definition~\ref{def_strong_hyp_1}. This requirement implies that
all three metrics must be Lorentzian, as we explain next.

Since~$k_a$ is spacelike with respect to~$g^{ab}$ and orthogonal
to~$n_a$, and~$n_a$ is timelike with respect to the three metrics, we
can, without loss of generality, assume that~$k_a$ is spacelike with
respect to all three metrics. This leads to the
conditions~$-n^{2}k^{2} \geq 0$, $-\hat{n}^{2} \hat{k}^{2} \geq 0$,
and~$-\tilde{n}^{2} \tilde{k}^{2} \geq 0$, ensuring that the
eigenvalues~$\lambda_{\pm}$, $\hat{\lambda}_{\pm}$,
and~$\tilde{\lambda}_{\pm}$ are real, thereby satisfying condition 1
in Definition~\ref{def_strong_hyp_1}. It is evident that this
condition fails if at least one of these metrics has the
signature~$(+,+,+,+)$.

Furthermore, suppose that~$\hat{g}^{ab}$ (or any of the other metrics)
has the signature~$(-,+,+,0)$. Under this assumption, there exists a
vector~$k_a$ such that~$\hat{g}^{ab} k_b = 0$, leading to a degenerate
eigenvalue~$\hat{\lambda}_{\pm} = 0$. As will become evident from the
explicit expressions of the eigenvectors, whenever such a degeneracy
occurs, condition 2 of Definition~\ref{def_strong_hyp_1} fails.

%%%%%%%%%%%%%%%%%%%%%%%%%%%%%%%%%%%%%%%%%%%%%%%%%%%%%%%%%%%%%
\subsubsection{Characteristic structure and strong hyperbolicity}
%%%%%%%%%%%%%%%%%%%%%%%%%%%%%%%%%%%%%%%%%%%%%%%%%%%%%%%%%%%%%

In this subsection we show the eigenpairs~$\left(\lambda_{i},(
v_{i})_{q}\right)$ of~$P^{bq}(\lambda)$ i.e. the pairs that
satisfied~$P^{bq}( \lambda_{i})( v_{i})_{q}=0$. We also check when the
algebraic and geometric multiplicities of each eigenvalue of~$P^{bq}(
\lambda)$ are equal or different, thereby determining whether
condition 2 of definition~\ref{def_strong_hyp_1} holds.

We begin by introducing useful notation for the null vectors of the
three metrics. Using Eqns.~\eqref{eigenva_1}, we define
\begin{align}
  ( l_{\pm} )_{a} &\equiv\lambda_{\pm}n_{a}
  +k_{a}\rightarrow(l_{\pm})_{a}g^{ab}(l_{\pm})_{b}=0,\nonumber\\
  ( l_{\hat{\pm}} )_{a} &\equiv\hat{\lambda}_{\pm}n_{a}
  +k_{a}\rightarrow(l_{\hat{\pm}})_{a}\hat{g}^{ab}(l_{\hat{\pm}})_{b}=0,
  \nonumber\\
  (  l_{\tilde{\pm}} )_{a} &\equiv\tilde{\lambda}_{\pm}n_{a}
  +k_{a}\rightarrow( l_{\tilde{\pm}} )_{a}\tilde{g}^{ab}(
  l_{\tilde{\pm}})_{b}=0.\label{eq_null_g_1}
\end{align}
This means that for each fixed normalized~$k_{a}$, the
metrics~$g^{ab}$, $\hat{g}^{ab}$ and~$\tilde{g}^{ab}$ have two null
vector, $( l_{\pm})_{a}$, $( l_{\hat{\pm}})_{a}$,
$(l_{\tilde{\pm}})_{a}$ respectively. In addition, we notice that the
null cones have intersections when the~$\lambda$'s in these null
vectors become equals. We also introduce the normalize
vectors~$(e_{1,2})_{q}$, they satisfy~$e_{1,2}^2=1$
and~$e_{1}.e_{2}=e_{1,2}.n=e_{1,2}.k=0$.

%%%%%%%%%%%%%%%%%%%%%%%%%%%%%%%%%%%%%%%%%%%%%%%%%%%%%%%%%%%%%
\begin{figure}[t!] 
%%%%%%%%%%%%%%%%%%%%%%%%%%%%%%%%%%%%%%%%%%%%%%%%%%%%%%%%%%%%%
  \centering \includegraphics[width=1\textwidth]{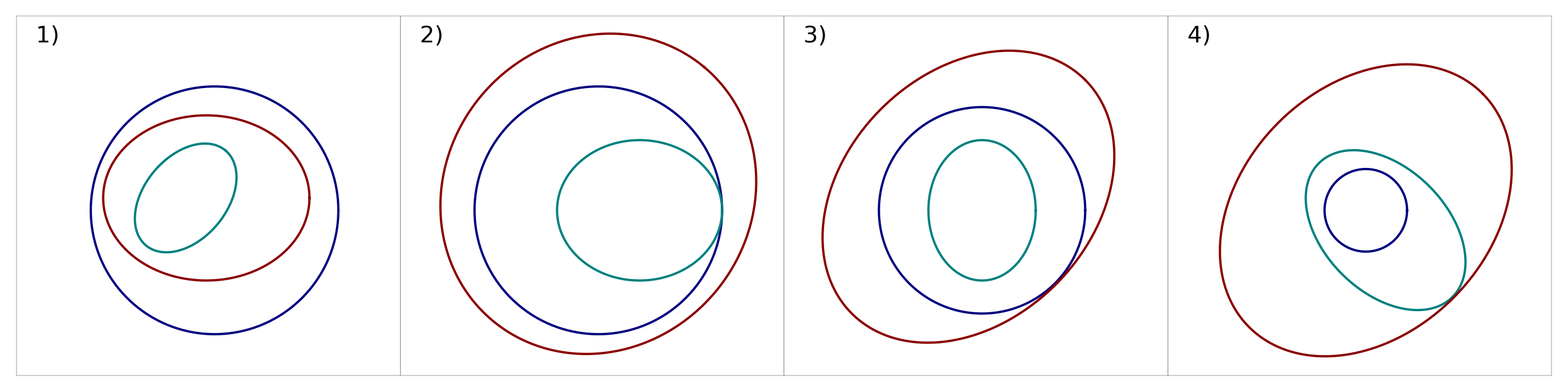}
	\caption{ In these figures, we present the four different
          cases (\ref{caso_1}), (\ref{caso_2}), (\ref{caso_4}), and
          (\ref{caso_6}). The plots depict some ``horizontal''
          projection of the null cones associated with the metrics:
          $\textcolor{navy}{g_{ab}}$ (\textcolor{navy}{blue}),
          $\textcolor{teal}{\tilde{g}_{ab}}$
          (\textcolor{teal}{green}), and
          $\textcolor{DarkRed}{\hat{g}_{ab}}$
          (\textcolor{DarkRed}{red}). In case (1), the null cones do
          not intersect; in case (2), there is a single intersection
          between the null cones of $\textcolor{navy}{g_{ab}}$ and
          $\textcolor{teal}{\tilde{g}_{ab}}$; in case (3), the
          intersection occurs between the null cones of
          $\textcolor{navy}{g_{ab}}$ and
          $\textcolor{DarkRed}{\hat{g}_{ab}}$; and in case (4),
          between those of $\textcolor{teal}{\tilde{g}_{ab}}$ and
          $\textcolor{DarkRed}{\hat{g}_{ab}}$. Under certain
          conditions explained in the text, cases (1), (2), and (3)
          are strongly hyperbolic, whereas case (4) is intrinsically
          weakly hyperbolic.  We emphasize that, since our discussion
          focuses on hyperbolicity, we do not impose causality
          restrictions. As a result, the null cones of
          $\textcolor{teal}{\tilde{g}_{ab}}$ and
          $\textcolor{DarkRed}{\hat{g}_{ab}}$ can be larger than the
          spacetime null cone of $\textcolor{navy}{g_{ab}}$, as shown
          in the figure.  }
	\label{fig:mi_imagen}
\end{figure}
%%%%%%%%%%%%%%%%%%%%%%%%%%%%%%%%%%%%%%%%%%%%%%%%%%%%%%%%%%%%%

In the following four cases, we analyze conditions 2 and 3 from
Definition~\ref{def_strong_hyp_1}. The first three cases satisfy
condition 2 and, under certain additional assumptions, also satisfy
condition 3, resulting in strongly hyperbolic systems. The fourth
case, however, does not satisfy condition 2, making the system
intrinsically weakly hyperbolic.

In Figure 1, we illustrate the projections of the null cones
corresponding to each of these cases. It is evident that any other
configuration (when the null cones intersect more than once), can be
derived from one of these four fundamental cases.

\begin{case}
  \label{caso_1} Null cones of~$g^{ab}$, $\tilde{g}^{ab}$
  and~$\hat{g}^{ab}$ has not intersection so that for each~$k_{a}$,
  the eigenvalues~$\lambda_{+},\tilde{\lambda}_{+},\hat{\lambda}_{+}$,
  $\lambda_{-},\tilde{\lambda}_{-},\hat{\lambda}_{-}$ are distinct.
\end{case}

From Eqn.~\eqref{eq_det_ele_1}, the algebraic multiplicities are
\begin{align}
q_{\lambda_{+}}=2, & \ q_{\tilde{\lambda}_{+}}=1, &  q_{\hat{\lambda}_{+}}=1,\nonumber\\
q_{\lambda_{-}}=2, & \ q_{\tilde{\lambda}_{-}}=1, & q_{\hat{\lambda}_{-}}=1.
\end{align}

The eigenpairs are
\begin{align}
( \lambda_{+},(v_{1})_{a}) & \
( \lambda_{+},(v_{2})_{a}) &
( \tilde{\lambda}_{+}, (v_{3})_{a}) & \
( \hat{\lambda}_{+},(v_{4})_{a}) \nonumber\\
( \lambda_{-},(w_{1})_{a}) & \
( \lambda_{-},(w_{2})_{a}) &
( \tilde{\lambda}_{-},(w_{3})_{a}) & \ ( \hat{\lambda}_{-},(w_{4})_{a})
\end{align}
where
\begin{align}
  (v_{1})_{q} &= (e_{1})_{q}-\frac{l_{+}.\tilde{G}.e_{1}}
  { l_{+}.\tilde{g}.l_{+} }(l_{+})_{q},
  &\quad
  (w_{1})_{q} &= (e_{1})_{q}-\frac{  l_{-}.\tilde{G}.e_{1} }
  {l_{-}.\tilde{g}.l_{-}}(l_{-})_{q},  \label{vectors_v_w_1}\\
  (v_{2})_{q} &= (e_{2})_{q}-\frac{l_{+}.\tilde{G}.e_{2}}
  {l_{+}.\tilde{g}.l_{+} }(l_{+})_{q},
  &\quad
  (w_{2})_{q} &= (e_{2})_{q}-\frac{l_{-}.\tilde{G}.e_{2}}
  {l_{-}.\tilde{g}.l_{-}}(l_{-})_{q},\\
  (v_{3})_{q} &= (l_{\tilde{+}})_{q},
  &\quad
  (w_{3})_{q} &= ( l_{\tilde{-}})_{q},\\
  (v_{4})_{q} &= g_{qc}\hat{G}_{+}^{cd}(l_{\hat{+}})_{d},
  &\quad (w_{4})_{q} &= g_{qc}\hat{G}_{-}^{cd}(l_{\hat{-}})_{d},
  \label{vectors_v_w_4}
\end{align}
with
\begin{align}
  \begin{array}[c]{ccc}
    \hat{G}_{+}^{cd} \equiv\left(\hat{\beta}_{+}\hat{G}^{cd}-\hat{\alpha}_{+}
    g^{cd}\right),  &  & \hat{G}_{-}^{cd}\equiv\left(\hat{\beta}_{-}\hat{G}^{cd}
    -\hat{\alpha}_{-}g^{cd}\right),
  \end{array}
\end{align}
and
\begin{align}
\begin{array}[c]{ccc}
  \hat{\alpha}_{\pm}\equiv
  (l_{\hat{\pm}})_{a} (g^{ab}+\tilde{G}^{ac}g_{cd}\hat{G}^{db})
  (l_{\hat{\pm}})_{b}, & &
  \hat{\beta}_{\pm}\equiv
  (l_{\hat{\pm}})_{a}\tilde{g}^{ab}
  (l_{\hat{\pm}})_{b}\,.
\end{array}
\end{align}
We conclude that
\begin{align}
  s_{\lambda_{+}}&\equiv
  \dim\left( \ker(P^{b}{}_{q}(\lambda_{+}))\right) =2, &
  s_{\tilde{\lambda}_{+}}\equiv
  \dim\left(\ker( P^{b}{}_q(\tilde{\lambda}_{+}))\right) =1,\nonumber\\
  s_{\lambda_{-}}&\equiv
  \dim\left(\ker(P^{b}{}_q(  \lambda_{-}))\right) = 2, &
  s_{\tilde{\lambda}_{-}}\equiv
  \dim\left(\ker( P^{b}{}_q(\tilde{\lambda}_{-}))\right) = 1,
\end{align}
and
\begin{align}
\begin{array}[c]{c}
  s_{\hat{\lambda}_{+}}\equiv
  \dim\left( \ker(P^{b}{}_q(\hat{\lambda}_{+}))\right) = 1,\\
  s_{\hat{\lambda}_{-}}\equiv
  \dim\left( \ker(P^{b}{}_q(\hat{\lambda}_{-}))\right) = 1\,.
\end{array}
\end{align}
Finally, since~$s_{i}=q_{i}$ for all eigenvalues, condition~2 is
satisfied. In addition, since the eigenvectors are continuous with
respect to~$k_a$ condition 3 is satisfied too and the systems is
strongly hyperbolic.

As mentioned before, if the signature of~$\hat{g}^{ab}$
(or~$\tilde{g}^{ab}$) is~$( -,+,+,0)$, then there exist~$k_{a}$ such
that~$\tilde{g}^{ab}k_b=0$. Consequently, $\hat{\lambda}_{\pm }=0$
 increases the algebraic multiplicity
to~$q_{\hat{\lambda}_{+}}=2$, while the geometric multiplicity
remains~$q_{\hat{\lambda}_{+}}=1$. This occurs because the
eigenvectors~$(v_{4})_{q}$ and~$( w_{4})_{q}$ become equals, as~$(
l_{\hat{+}})_{q}=(l_{\hat{-}})_{q}=k_q$. This degeneration results in
the failure of condition 2 in definition~\ref{def_strong_hyp_1}.

\begin{case}
  \label{caso_2} $g^{ab}$ and~$\tilde{g}^{ab}$ share one null vector,
  and they do not share null vectors with~$\hat{g}^{ab}$, so that for
  a particular value of~$k_{a}=(k_{\textrm{deg}})_{a}$, the
  eigenvalues satisfy~$\lambda_{+}=\tilde{\lambda}_{+}\neq
  \hat{\lambda}_{+}$.
\end{case}

In this case, algebraic multiplicities are
\begin{align}
q_{\lambda_{+}=\tilde{\lambda}_{+}}&=3, & q_{\hat{\lambda}_{+}}&=1,\nonumber\\
q_{\lambda_{-}}&=2, & q_{\tilde{\lambda}_{-}}&=1, & q_{\hat{\lambda}_{-}}=1\,.
\end{align}

Eigenvectors are the same to the previous case, maybe except
for $v_{1}$, $v_{2}$ and~$v_{3}$, those associated
to $\lambda_{+}=\tilde{\lambda}_{+}$.

We notice that when~$\lambda=\lambda_{+}$, the covector~$(l_{+}) _{a}$
is null~$l_{+}^{2}=0$, and the characteristic equation reduce to
\begin{equation}
  P^{bq}\delta A_{q}= \left( -(l_{+})^{b} (l_{+})^{q}
  +\hat{G}^{ba}(l_{+})_{a}\tilde{G}^{cq}(l_{+})_{c}\right)
  \delta A_q=0\,. \label{sim_red_a_1}
\end{equation}
Since~$q_{\lambda_{+}=\tilde{\lambda}_{+}}=3$ we need to find three
linearly independent solutions of this equation to satisfy
condition~$2$. It is no complicated to conclude that we need to
include one extra conditions in order that this happens. We have two
options:

The first one is
\begin{align}
  \tilde{G}^{cq}(l_{+})_{c}=\alpha(l_{+})^{q},
  \label{caso3_cond_1_a}
\end{align}
in this case the characteristic equations reduce to
\begin{align}
  P^{bq}\delta A_{q}
  =\left( -(l_{+})^{b}+\alpha\hat{G}^{ba}(l_{+})_{a}\right)
  (l_{+})^{q}\delta A_{q}=0, \label{sim_red_b_2}
\end{align}
so the three solution for~$\delta A_{q}$ can be obtained from
Eqns.~\eqref{vectors_v_w_1}-\eqref{vectors_v_w_4}. They are
\begin{align}
(v_{1})_{q}=(e_{1})_{q},\quad
(v_{2})_{q}=(e_{2})_{q},\quad
(v_{3})_{q}=(l_{+})_{q},
\end{align}
so condition~$2$ is satisfied. In addition, noticing that~$v_1,$ $v_2$
and~$v_3$ are continuous for all~$k_a$, since the expression for them
Eqns.~\eqref{vectors_v_w_1}-\eqref{vectors_v_w_4} do not change for
any value of~$k_a$, we conclude that condition 3 holds and that this
system is strongly hyperbolic.

The other option is
\begin{align}
  \hat{G}^{ba}(l_{+})_{a}=\gamma(l_{+})^{b},\label{caso3_cond_2_b}
\end{align}
in this case, the characteristic equation reduce to
\begin{align}
  (l_{+})^{b} (l_{+})_c \left( -g^{cq}+\gamma\tilde{G}^{cq}\right)
  \delta A_{q}=0, \label{cha_eq_A_2}
\end{align}
so it is always possible to find three solutions for~$\delta
A_{q}$. However we need the explicit expression of~$\tilde{G}^{ba}$ to
calculate them, and finally check condition 3. We can not give a
general statement about the strong hyperbolicity of this case,
until~$\tilde{G}^{ba}$ is given.

\begin{case}
  \label{caso_4} $g^{ab}$ and~$\hat{g}^{ab}$ share one null vector,
  and they do not share null vectors with~$\tilde{g}^{ab}$, i.e. for a
  particular value of~$k_{a}=(k_{\textrm{deg}})_{a}$, the eigenvalues
  satisfy $\lambda_{+}=\hat{\lambda}_{+} \neq\tilde{\lambda}_{+}$.
\end{case}

In this case, algebraic multiplicities are
\begin{align}
q_{\lambda_{+}=\hat{\lambda}_{+}}&=3, & q_{\tilde{\lambda}_{+}}&=1, & \\
q_{\lambda_{-}}&=2, & q_{\tilde{\lambda}_{-}}&=1, & q_{\hat{\lambda}_{-}}=1\,. &
\end{align}

This case is similar to the previous one, the eigenvectors are equal
to case~(\ref{caso_1}), maybe except for~$v_{1}$, $v_{2}$ and~$v_{4}$,
those associated to~$\lambda_{+}=\hat{\lambda}_{+}$.

As before, if we assume condition~(\ref{caso3_cond_1_a}), we obtain
the characteristic equation~(\ref{sim_red_a_1}) and three solution
for~$\delta A_q$, satisfying condition 2. We can obtain these
solutions from Eqns.~\eqref{vectors_v_w_1}-\eqref{vectors_v_w_4}, but
we can not know if they are linearly independent. Therefore, if we can
check that
\begin{align}
  (v_{1})_{q} &= (e_{1})_{q},\\
  (v_{2})_{q} &= (e_{2})_{q},\\
  (v_{4})_{q} &= g_{qc}\hat{G}_{+}^{cd}(l_{\hat{+}})_{d}
  = (l_+.\tilde{g}.l_+)\hat{G}^{cd}(l_{\hat{+}})_{d}\,.
\end{align}
are linearly independent, then by using the same previous continuous
argument condition 3 is satisfied, and the system is strongly
hyperbolic. As before this need to be checked with the explicit
expression of~$\hat{G}^{cd}$. If~$v_4$ is not linearly independent
of~$v_1$, $v_2$, then we can choose~$(v_4)_q=(l_{+})_{q}$, but now we
need to check condition 3, since we lose the continuity argument. As a
final comment of this case, it is not clear for us if it possible to
satisfy condition~\eqref{caso3_cond_1_a}) and
keep~$\lambda_{+}=\hat{\lambda}_{+} \neq\tilde{\lambda}_{+}$.

On the other hand, if we assume condition~\eqref{caso3_cond_2_b}, the
characteristic equation is given by Eqn.~\eqref{cha_eq_A_2}, and as in
the previous case, we obtain three linearly independent solutions
for~$\delta A_q$, satisfying condition~2. Those solutions are
\begin{align}
  (v_{1})_{q}&= (e_{1})_{q}-\frac{(l_{+}.\tilde{G}.e_{1})}{(l_{+}.\tilde{g}.l_{+})}
  (l_{+})_{q},\\
  (v_{2})_{q}&=(e_{2})_{q}-\frac{(l_{+}.\tilde{G}.e_{2})}{(l_{+}.\tilde{g}.l_{+})}
  (l_{+})_{q},\\
  (v_{4})_{q}&= N_{q}-\frac{1+\gamma(l_{+}.\tilde{G}.N)}
  {\gamma(l_{+}.\tilde{G}.l_{+})}(l_{+})_{q},\label{eq_v_caso_4}
\end{align}
where~$N^{q}\equiv-\frac{1}{2k^{2}}(l_{-})^{q}$, which
satisfies~$N^{2}=0$, $N.l=-1$ and~$N.e_{1,2}=0$. Again, we can not use
the continuity argument to check condition 3, since those solutions
are not obtained from
Eqns.~\eqref{vectors_v_w_1}-\eqref{vectors_v_w_4}. The explicit
calculation of condition 3 is rather lengthy, so to avoid it we
propose a different condition. Independently of~$k_a$, we assume that
\begin{align}
  \hat{G}^{ab}=\hat{g}^{ab}\,.
  \label{Ghat_condition_1}
\end{align}
In this case the eigenvector expressions are given by
Eqn.~\eqref{eq_v_caso_4} for all~$k_a$. So conditions~2 and~$3$ are
satisfied, concluding that the system is strongly hyperbolic.

\begin{case}
  \label{caso_6} $\tilde{g}^{ab}$ and~$\hat{g}^{ab}$ share one null
  vector, and they do not share null vectors with~$g^{ab}$, so for a
  particular value of~$k_{a}=(k_{\textrm{deg}})_{a}$, the eigenvalues
  satisfied
  that~$\tilde{\lambda}_{+}=\hat{\lambda}_{+}\neq\lambda_{+}$.
\end{case}

In this case, the algebraic multiplicities are
\begin{align}
  q_{\lambda_{+}}&=2, & q_{\tilde{\lambda}_{+}=\hat{\lambda}_{+}}&=2, & \\
  q_{\lambda_{-}}&=2, & q_{\tilde{\lambda}_{-}}&=1, & q_{\hat{\lambda}_{-}}=1\,. &
\end{align}
The eigenvectors are the same that in case~\ref{caso_1}, except
for~$v_3$ and~$v_4$, those associated
to~$\tilde{\lambda}_{+}=\hat{\lambda}_{+}$. Since~$\hat{\beta}_{+}=\left(
l_{\hat{+}}\right) _{a}\tilde{g}^{ab}\left( l_{\hat{+}}\right)
_{b}=0$, the expressions for~$v_3$ and~$v_4$, using
Eqns.~\eqref{vectors_v_w_1}-\eqref{vectors_v_w_4}, become linearly
dependent
\begin{align}
  (v_{3})_{q}= (l_{\hat{+}})_{q}, \quad
  (v_{4})_{q}=-\hat{\alpha}_{+}(l_{\hat{+}})_{q}
\end{align}

We conclude that the geometric multiplicity associated
with~$\tilde{\lambda}_{+}=\hat{\lambda}_{+}$ is reduced
to~$1$. Finally since~$2=q_{\tilde{\lambda}_{+}=\hat{\lambda}_{+}}\neq
s_{\tilde{\lambda}_{+}=\hat{\lambda}_{+}}=1$ condition 2 is not
satisfied. Thus, this system is intrinsically weakly hyperbolic.

%%%%%%%%%%%%%%%%%%%%%%%%%%%%%%%%%%%%%%%%%%%%%%%%%%%%%%%%%%%%%
\subsubsection{Diagonalization of~$\mathbf{M}(\lambda)$
  and~$\mathbf{S}(\lambda)$}
%%%%%%%%%%%%%%%%%%%%%%%%%%%%%%%%%%%%%%%%%%%%%%%%%%%%%%%%%%%%%

In the following discussion, we focus on cases~\ref{caso_1}
and~\ref{caso_6}, providing an explanation of the diagonalization
of~$\mathbf{M}(\lambda)$ and~$\mathbf{S}(\lambda)$. We recall that for
this Electrodynamic example, the first order principal
symbol~$\mathbf{M}(\lambda)$ and second order principal
symbol~$\mathbf{S}(\lambda)$ are matrices of dimension~$8\times8$
and~$4\times4$, respectively.

The matrix~$\mathbf{M}(\lambda)$ can be written as
\begin{align}
\mathbf{M}(\lambda) = \mathbf{P}\left[
\begin{array}
	[c]{cccc}%
	\lambda \mathbf{1}-\mathbf{D}_{P} & 0 & 0 & 0\\
	0 & \lambda \mathbf{1}-\mathbf{D}_{P} & 0 & 0\\
	0 & 0 & \lambda \mathbf{1}-\mathbf{D}_{C} & 0\\
	0 & 0 & X & \lambda \mathbf{1}-\mathbf{D}_{G}%
\end{array}
\right] \mathbf{P}^{-1}
\end{align}
where
\begin{align}
\mathbf{D}_{P}=\left[
\begin{array}
	[c]{cc}%
	\lambda_{+} & 0\\
	0 & \lambda_{-}%
\end{array}
\right],\text{ \ }
\mathbf{D}_{C}=\left[
\begin{array}
	[c]{cc}%
	\hat{\lambda}_{+} & 0\\
	0 & \hat{\lambda}_{-}%
\end{array}
\right],\text{ \ \ }
\mathbf{D}_{G}=\left[
\begin{array}
	[c]{cc}%
	\tilde{\lambda}_{+} & 0\\
	0 & \tilde{\lambda}_{-}%
\end{array}
\right] ,
\end{align}
and~$\mathbf{P}$ is an invertible matrix that depends on~$k_{a}$,
and~$\mathbf{X}$ is a~$2\times2$ matrix.

In case~\ref{caso_1}, the matrix~$\mathbf{P}$ can be computed
using~Eqn.~\eqref{Eq_P_1_a} and~$\mathbf{X}=\mathbf{0}$, it follows
that~$\mathbf{M}(\lambda)$ is diagonalizable. Furthermore,
$\mathbf{S}(\lambda)$ can be written as a product of two
diagonalizable first order pencils. It is
\begin{align} 
  \mathbf{S}(\lambda)= \mathbf{F} \left[
    \begin{array}
      [c]{cc}%
      \lambda \mathbf{1}-\mathbf{D}_{P} & 0\\
      0 & \lambda \mathbf{1}-\mathbf{D}_{C}%
    \end{array}
    \right] \mathbf{F}^{-1}
  \mathbf{V}_{1}
  \left[\begin{array}
      [c]{cc}%
      \lambda \mathbf{1}-\mathbf{D}_{P} & 0\\
      0 & \lambda \mathbf{1}-\mathbf{D}_{G}%
    \end{array}
    \right] \mathbf{V}_{1}^{-1} \label{EM_S_1}%
\end{align}
where
\begin{align}
\mathbf{V}_{1}&=\left[\begin{array}[c]{cccc}%
	(v_{1})_{q} & (w_{1})_{q} & (v_{3})_{q} & (w_{3})_{q}%
  \end{array}\right],\quad
\mathbf{V}_{1}\mathbf{Q}=\left[\begin{array}[c]{cccc}%
  (v_{2})_{q} & (w_{2})_{q} & (v_{4})_{q} & (w_{4})_{q}%
  \end{array}\right],
\end{align}
and
\begin{align}\mathbf{F}=\mathbf{V}_{1}
(\mathbf{D}_{1}\mathbf{Q}-\mathbf{Q}\mathbf{D}_{2})\,.
\end{align}

In case~\ref{caso_6},the matrix~$\mathbf{X}$ is given
by~$\mathbf{X}=\textrm{diag}(1,0)$ which implies
that~$\mathbf{M}(\lambda)$ is not diagonalizable. Interestingly,
$\mathbf{S}(\lambda)$ can still be expressed as a product of two first
order diagonzalible pencils as Eqn.~\eqref{EM_S_1}, where
\begin{align}
  \mathbf{V}_{1}&=\left[
    \begin{array}
      [c]{cccc}%
      (e_{1})_{q}-\frac{(l_{+}.\tilde{G}.e_{1})
      }{(l_{+}.\tilde{g}.l_{+})}(l_{+})  _{q} & (e_{2})_{q}
      -\frac{(l_{-}.\tilde{G}.e_{2})}{l_{-}.\tilde{g}.l_{-}}(l_{-})_{q} &
      (l_{\tilde{+}})_{q} & (l_{\tilde{-}})_{q}
    \end{array}\right] \nonumber\\
  \mathbf{V}_{1}\mathbf{Q} & =\left[\begin{array}[c]{cccc}%
      (  e_{1})_{q}-\frac{(l_{+}.\tilde{G}.e_{1})}{(l_{+}.\tilde{g}.l_{+})}
      (l_{+})_{q} & (e_{2})_{q}-\frac{(l_{-}.\tilde{G}.e_{2})}{l_{-}
	.\tilde{g}.l_{-}}(  l_{-})_{q} & (l_{\tilde{+}})_{q} & (l_{\tilde{-}})_{q}
    \end{array}
    \right].
\end{align}
However, in this case, $\mathbf{F}\neq\mathbf{V}_{1}
(\mathbf{D}_{1}\mathbf{Q}-\mathbf{Q}\mathbf{D}_{2})$. In fact, it can
be verified that~$\det\left(\mathbf{V}_{1}
(\mathbf{D}_{1}\mathbf{Q}-\mathbf{Q}\mathbf{D}_{2})\right)=0$.

%%%%%%%%%%%%%%%%%%%%%%%%%%%%%%%%%%%%%%%%%%%%%%%%%%%%%%%%%%%%%
\section{Conclusions}
\label{section:Conclusions}
%%%%%%%%%%%%%%%%%%%%%%%%%%%%%%%%%%%%%%%%%%%%%%%%%%%%%%%%%%%%%

In the context of gravitational wave science there is interest in
considering theories that deviate from GR. Such considerations result
in systems whose characteristic structure may be much more complicated
than that of GR. It is therefore valuable to have characterizations of
hyperbolicity, which can in turn guarantee well-posedness of the
initial value problem, that can be applied as easily as possible. So
motivated, in this study we provided a detailed explanation of how to
verify the strong hyperbolicity of fully second-order PDEs (see
Definition~\ref{def_strong_hyp_1}) without first explicitly reducing
them to a first-order system. We demonstrated that analyzing the
kernel of the second-order pencil (and principal symbol)
$\mathbf{S}(\lambda)$ is sufficient to establish strong hyperbolicity
of a formulation of a theory. In Theorem~\ref{theor_2}, we summarized
how to deduce the kernel of the first-order
pencil~$\mathbf{M}(\lambda)$ (obtained by reducing the second-order
PDEs to a first-order system) given the kernel of
$\mathbf{S}(\lambda)$. By employing matrix pencils, we were moreover
able to preserve the covariance of the equations throughout all
calculations, thereby avoiding the need to introduce a~$3+1$
decomposition from the outset.

In the theorem mentioned above the kernel of~$\mathbf{S}(\lambda)$ is
accommodated in the columns of two square matrices, $\mathbf{V}_1$
and~$\mathbf{V}_1\mathbf{Q}$, where~$\mathbf{V}_1$ is invertible
while~$\mathbf{Q}$ is not necessarily so. Then, to establish the
diagonalization of~$\mathbf{M}(\lambda)$, the most critical condition
required to ensure strong hyperbolicity, the
matrix~$\mathbf{D}_1\mathbf{Q}-\mathbf{Q}\mathbf{D}_2$ must be
invertible, where~$\mathbf{D}_1$ and~$\mathbf{D}_2$ are real diagonal
matrices composed of the eigenvalues of~$\mathbf{S}(\lambda)$. Under
these conditions, Theorem~\ref{theor_M_S} reveals an important result
regarding the structure of~$\mathbf{S}(\lambda)$. Specifically, the
second-order pencil~$\mathbf{S}(\lambda)$ can be factorized as the
product of two first-order diagonalizable pencils whose explicit
expressions are provided in the theorem. However, it should be
emphasized that the converse is not always true: factorization alone
does not imply strong hyperbolicity. This limitation is explicitly
demonstrated in the examples presented in this article. In particular,
repeated application of a strongly hyperbolic first order operator
results in a second order system that is only weakly hyperbolic. The
latter result is summarized by Theorem~\ref{theorem_D2}.

To illustrate the application of our definition of strong
hyperbolicity, we presented two examples. In the first, the
\textit{almost wave equation in 1+1}, we presented a straightforward
application of Theorems~\ref{theor_2} and~\ref{theor_M_S}. We
concluded that, in the non-degenerate case ($a \neq b$), the PDE is
strongly hyperbolic, whereas in the degenerate case ($a = b$), it is
weakly hyperbolic. This example provides an clarifying discussion on
how the first-order reduction framework aligns with our definition of
strong hyperbolicity.  The degenerate case~$(
\partial_{t}-a\partial_{x})^{2}\phi=0$ shares structural similarities
with the examples studied in~\cite{FigHelKov24}, where the weakly
hyperbolic equation~$\square^{2}\phi=f$ is analyzed, and strongly
hyperbolic first order reductions can only be arrived at by performing
a careful first order reduction in which one avoids introducing all
subdominant derivatives as evolution variables. It would be very
interesting to investigate such reductions systematically for generic
weakly hyperbolic PDEs of both low and high derivative order. With
that, both higher order derivatives as in~\cite{FigHelKov24}, and
additional fields, as in~\cite{SalCloFig22} could be naturally
treated. This is left for the future. For now, we simply observe that
Theorem~\ref{theorem_D2} guarantees that such a careful first order
reduction or equivalently, a carefully chosen norm is needed for
second order systems with this `repeated-operator' structure, and is
very indicative of a similar statement for higher order systems
constructed from repeated hyperbolic operators.

In the second example, we used our formalism to treat Maxwell's
equations in terms of the vector potential. We considered a general
formulation of the extension and gauge fixing, where two extra
Lorentzian metrics~$\tilde{g}^{ab}$ and $\hat{g}^{ab}$ were introduced
to fix the gauge and to grant specific formal evolution equations to
the constraints. We examined the strong hyperbolicity of these PDEs,
including cases where the characteristics are degenerate, not
previously studied in the literature, and explicitly computed the
eigenvectors associated with the system. So, in this formulation, we
demonstrated that if~$\tilde{g}^{ab}$ and~$\hat{g}^{ab}$ share at
least one common null covector that is not null with respect
to~$g^{ab}$, then the system is weakly hyperbolic. In all other cases,
as long as certain additional conditions hold, the PDEs are strongly
hyperbolic. Estimates for solutions could be easily derived in a
general formalism from the first-order reductions. The use of energy
estimates can clarify the discussion of the first example in the
degenerate case~$a=b$.

To wrap up, we have seen that the characterization of hyperbolicity of
fully second-order systems via matrix pencils offers an efficient and
practical approach in applications. We are currently extending this
framework to systems involving higher-order derivatives, and, in
parallel, we are conducting an investigation of high-order systems
featuring repeated hyperbolic operators, as well as gauge and
constraint degrees of freedom.

%%%%%%%%%%%%%%%%%%%%%%%%%%%%%%%%%%%%%%%%%%%%%%%%%%%%%%%%%%%%%
\section*{Acknowledgements}
%%%%%%%%%%%%%%%%%%%%%%%%%%%%%%%%%%%%%%%%%%%%%%%%%%%%%%%%%%%%%

This work was partially supported by the project PID2022-138963NB-I00,
funded by the Spanish Ministry of Science, Innovation and Universities
(MCIN/AEI/10.13039/501100011033); and by FCT (Portugal) through grant
Numbers UID/00099/2025, UID/PRR/00099/2025 and 2023.12549.PEX. We are
grateful to the ``Programa de projectes de recerca amb investigadors
convidats de prestigi reconegut,” which supported DH's UIB visit.

%%%%%%%%%%%%%%%%%%%%%%%%%%%%%%%%%%%%%%%%%%%%%%%%%%%%%%%%%%%%%
\bibliographystyle{unsrt}
\bibliography{StrongSecond.bbl}
%%%%%%%%%%%%%%%%%%%%%%%%%%%%%%%%%%%%%%%%%%%%%%%%%%%%%%%%%%%%%

%%%%%%%%%%%%%%%%%%%%%%%%%%%%%%%%%%%%%%%%%%%%%%%%%%%%%%%%%%%%%
\end{document}